\titleformat*{\section}{\normalfont\large\bfseries}
\titleformat*{\subsection}{\normalfont\normalsize\bfseries}
\theoremstyle{plain}
\newtheorem{thm}{Theorem}[section]
\newtheorem*{cor}{Corollary}
\newcommand{\bb}[2]{b^{(#1)}_{#2}}
\newcommand{\beps}{\boldsymbol\varepsilon}
\renewcommand{\div}{\operatorname{div}}
\newcommand{\esM}{\beps^\sigma_M}
\newcommand{\Lto}{L^2_0}
\newcommand{\Ne}{\textup{N}_\textup{e}}
\newcommand{\Nn}{\textup{N}_\textup{n}}
\newcommand{\PhM}{P^h_M}
\newcommand{\PiM}{\Pi^{(1)}_M}
\newcommand{\pr}{\partial}
\newcommand{\Rey}{\mathcal{R}}
\newcommand{\RhM}{R^h_M}
\newcommand{\VhM}{V^h_M}
\newcommand{\tri}[1]{\triangle^{(#1)}}
\newcommand{\bP}{\boldmath{P}}
\definecolor{otherblue}{rgb}{0,0.3,0.6}
\newcommand\rblf[1]{\textcolor{otherblue}{#1}}
\begin{document}

\pagenumbering{gobble}
\vspace*{-45pt}
\begin{center}
\includegraphics[width=.8\textwidth]{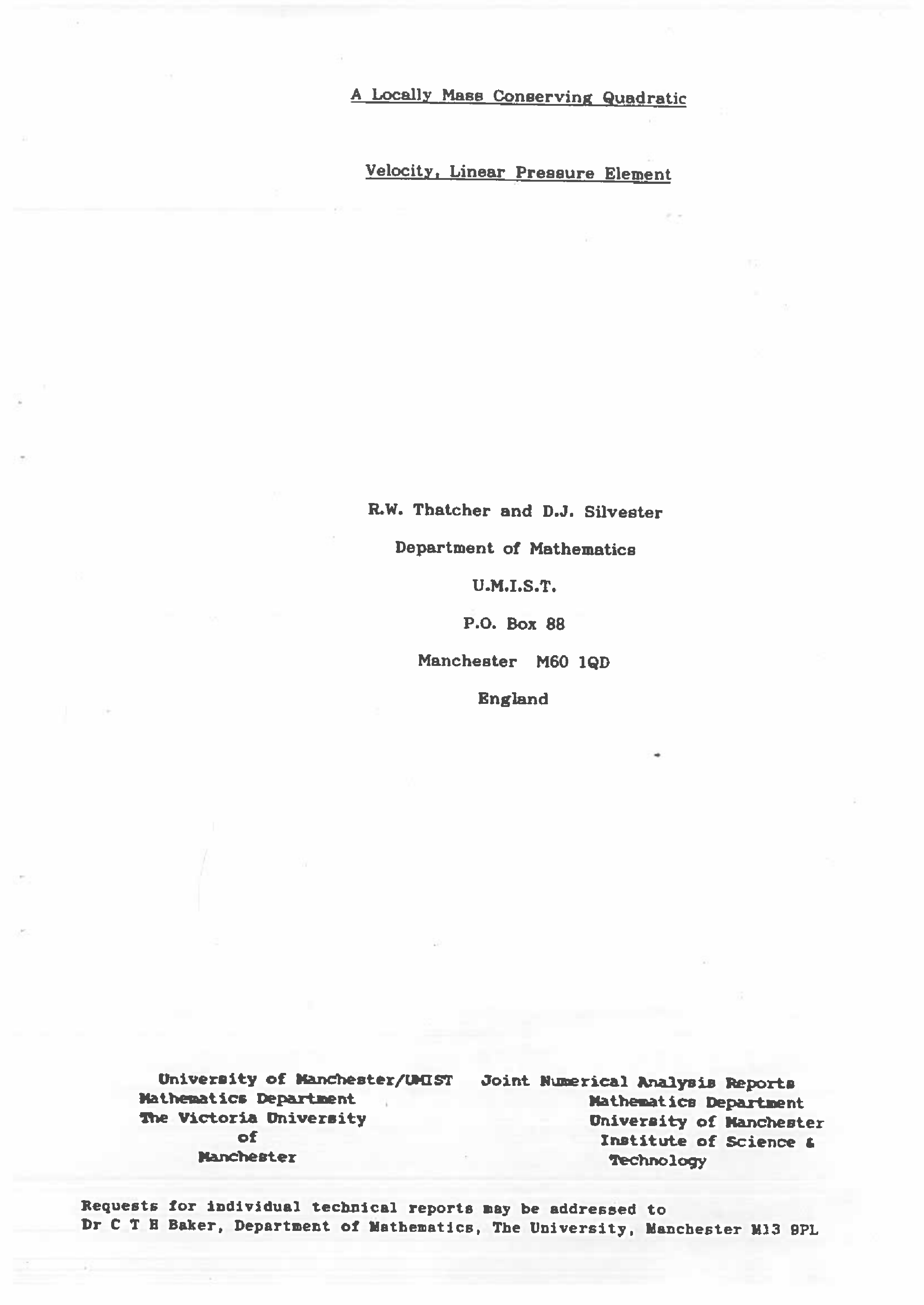}
\end{center}
\vspace*{-35pt}
\begin{abstract}
By supplementing the pressure space for the Taylor--Hood element a triangular
element that satisfies continuity over each element is produced. Making a novel
extension of the patch argument to prove stability, this element is shown to be
globally stable and give optimal rates of convergence on a wide range of
triangular grids. This theoretical result is extended in the discussion given
in the appendix, showing how optimal convergence rates can be obtained on all
grids.
  Two examples are presented, one illustrating the convergence rates and the
other illustrating difficulties with the Taylor--Hood element which are overcome
by the element presented here.
\end{abstract}

\newpage
\pagenumbering{arabic}\setcounter{page}{2}

\section{Introduction}\label{sec:1}

A popular triangular element for solving two-dimensional flows was introduced by
Hood \& Taylor~\cite{HT74}. It has the serious physical drawback that continuity is only
satisfied over the whole domain of the problem and not over each element. A consequence
of this phenomenon and the poor approximation that can result is given by Tidd, Thatcher
\& Kaye~\cite{TTK86} and a further example of poor results is illustrated in section~\ref{sec:6}.
Tidd et al. show that by supplementing the continuous linear pressure by functions
constant in each element, not only is continuity satisfied locally but also the quality
of the solution is greatly improved, at least for the particular problem that they were
considering. This idea of supplementing the pressure space had previously been suggested
by Gresho et al.~\cite{GLCL80} and is also discussed by Griffiths~\cite{Gr82}.
In this paper a stability analysis is presented which shows that this element is stable
on a wide range of triangular grids.\footnote{\rblf{\textit{Added in 2020\/}: The
stability result  established here has since been generalised  by Boffi et al. 
 ({\it Journal of Scientific Computing, 52:383--400}, 2012)
 to cover Hood--Taylor triangular  and tetrahedral meshes  that are augmented by 
 adding piecewise constant functions  to the pressure space of continuous piecewise polynomials 
 of degree $k$ ($k\geq1$ in 2D and $k\geq 2$ in 3D) under the restriction that 
 every element has at least one vertex in the interior of the domain.}}
Details of the formulation of the continuous and discrete equations for Stokes flow
and the implications of the analysis of Stokes flow for Navier--Stokes flow will not be
given here. Only sufficient detail to introduce the notation is presented; for further
details see Girault \& Raviart~\cite{GR86}.

The Stokes equations, in weak form, may be written
\begin{align}
\text{find } (u,q)\in (H^1_0(\Omega))^2\times \Lto(\Omega) &\text{ such that}\nonumber\\[3pt]
\nu (\nabla u, \nabla v)_{\Omega}  - (\div v, q)_{\Omega} &= (f,v)_{\Omega} 
\quad \forall\, v \in (H^1_0(\Omega))^2, \label{eq:01} \\
(\div u, p)_{\Omega} &= 0 \quad \forall\, p \in  \Lto(\Omega), \label{eq:02}
\end{align}
where
\begin{enumerate}[(i)]
\item $H^1_0(\Omega) = \{ f \mid f\in H^1(\Omega), f=0 \hbox{ on } \partial\Omega\}$, 
\item $\Lto(\Omega) =  \{ f \mid f\in L^2(\Omega), (f, 1)_{\Omega} =0 \}$,
\item $(f, g)_{\Omega} = \int_\Omega f g \,d\Omega$.
\end{enumerate}
The discrete analogue of (\ref{eq:01}) and (\ref{eq:02}) in the finite element subspaces
$V^h\subset (H^1_0(\Omega))^2$ and $P^h\subset  \Lto(\Omega)$ is given by
\begin{align}
\text{find } (u^h,q)^h\in V^h \times P^h \text{ such that}& \nonumber\\[3pt]
\nu (\nabla u^h, \nabla v^h)_{\Omega}  - (\div v^h, q^h)_{\Omega} &= (f,v^h)_{\Omega} 
\quad \forall\, v^h \in V^h, \label{eq:03}\\
(\hbox{div\,} u^h, p^h)_{\Omega} &= 0 \quad \forall\, p^h \in P^h.  \label{eq:04}
\end{align}
The essential result for stability and convergence is the discrete `inf--sup' condition
\begin{equation}\label{eq:05}
\inf_{p\in P^h} \sup_{v\in V^h}
\left\{ \frac{(\div v, p)_{\Omega}}{| v |_{1,\Omega}\,  \| p \|_{L^2(\Omega)} } \right\}
\geq \beta >0
\end{equation}
with $\beta$ independent of $h$.
In general, such a condition can only be satisfied if all triangular elements
satisfy a regularity condition of the form\footnote{\rblf{\textit{Added in 2020\/}: This condition may be
relaxed. Certain approximation methods  (including Taylor--Hood) are known to be inf--sup stable
on highly stretched grids.} }
\begin{equation}\label{eq:06}
h_{\triangle} \leq \sigma \rho_{\triangle}
\end{equation}
where $\sigma>1$ and the parameters $h_{\triangle}$ and $ \rho_{\triangle}$
are respectively the diameter of element $\triangle$ and the diameter of the largest
circle inside $\triangle$.

For the Taylor--Hood element on a triangulation $T^h$ of $\Omega$, $P^h$ and $V^h$
are given by
\begin{align}
P^h &=  \{ p\in \Lto(\Omega) \cap C(\overline{\Omega}) \mid p\in P_1(\triangle)\ 
\forall\, \triangle \in T^h\} \label{eq:07} \\
V^h &=  \{ v \in (H^1_0(\Omega))^2  \mid v_i\in P_2(\triangle) \text{ for } i=1,2 
\text{ and } \forall\, \triangle \in T^h\}, \label{eq:08}
\end{align}
where $P_k(\triangle)$ is the set of all polynomials of degree less than or
equal to $k$ in  $\triangle$. The fact that the  Taylor--Hood element satisfies the
`inf--sup' condition \eqref{eq:05} was originally shown by Bercovier \& Pironneau~\cite{BP79},
although the results can now be proved quite easily using the patch ideas of
Boland \& Nicolaides~\cite{BN83} or equivalently Stenberg~\cite{St84}.

For the element discussed here, the space $V^h$ is the same and given by \eqref{eq:08} but
$P^h$ is defined by
\begin{equation}\label{eq:09}
P^h =  \{  p \mid  p=p_0 +p_1,\  p_0\in \Lto(\Omega),\
p_0\in P_0(\triangle),\
p_1\in \Lto(\Omega) \cap C(\overline{\Omega}),\
p_1\in P_1(\triangle), \forall\, \triangle \in T^h\}.
\end{equation}

\section{Stability of patches of elements}\label{sec:2}

Proving the discrete `inf--sup'  condition became relatively easy only after the ideas
of locally stable patches of elements were developed. The continuous linear plus
constant pressure element does not fit neatly into the local analysis of Stenberg~\cite{St84}
nor Boland \& Nicolaides~\cite{BN83} because on any patch there are always two distinct
ways of producing a constant function in the pressure space (i.e., constant at the
vertex nodes and zero at the centroids or zero at the vertices and constant at the
centroids). The analysis presented here is in the spirit of the
approach by Stenberg~\cite{St84}.\footnote{\rblf{\textit{Added in 2020\/}: In retrospect, a simpler and 
more elegant way of establishing stability would be  to employ  the construction used in the analysis
 of the Taylor--Hood element in Girault \& Raviart~\cite[pp.176--180]{GR86} together
 with the overlapping patch  framework  developed  by
Stenberg in his  follow-up paper ({\it Mathematics of Computation, 54:495--508}, 1990).}}

The notation $\esM$ is used for the class of patches of elements topologically equivalent to the patch of elements $M$.
By a patch of elements it is understood that it is a union of elements, each of which has at least one side in
common with another element of the patch. For the precise definition, see Stenberg~\cite{St84}.
By way of illustration  we note that the
patches~1.2, 1.3, 1.4 in figure~\ref{fig:01} are topologically equivalent but they are not equivalent to the patch~1.1.
Indeed all patches of 3 elements are topologically equivalent to either 1.1 or 1.2. The only constraint on $\esM$ is
that all elements must satisfy the regularity constraint \eqref{eq:06} for some value of $\sigma$.

\begin{figure}[htb]
\begin{center}
\includegraphics[width=.6\textwidth]{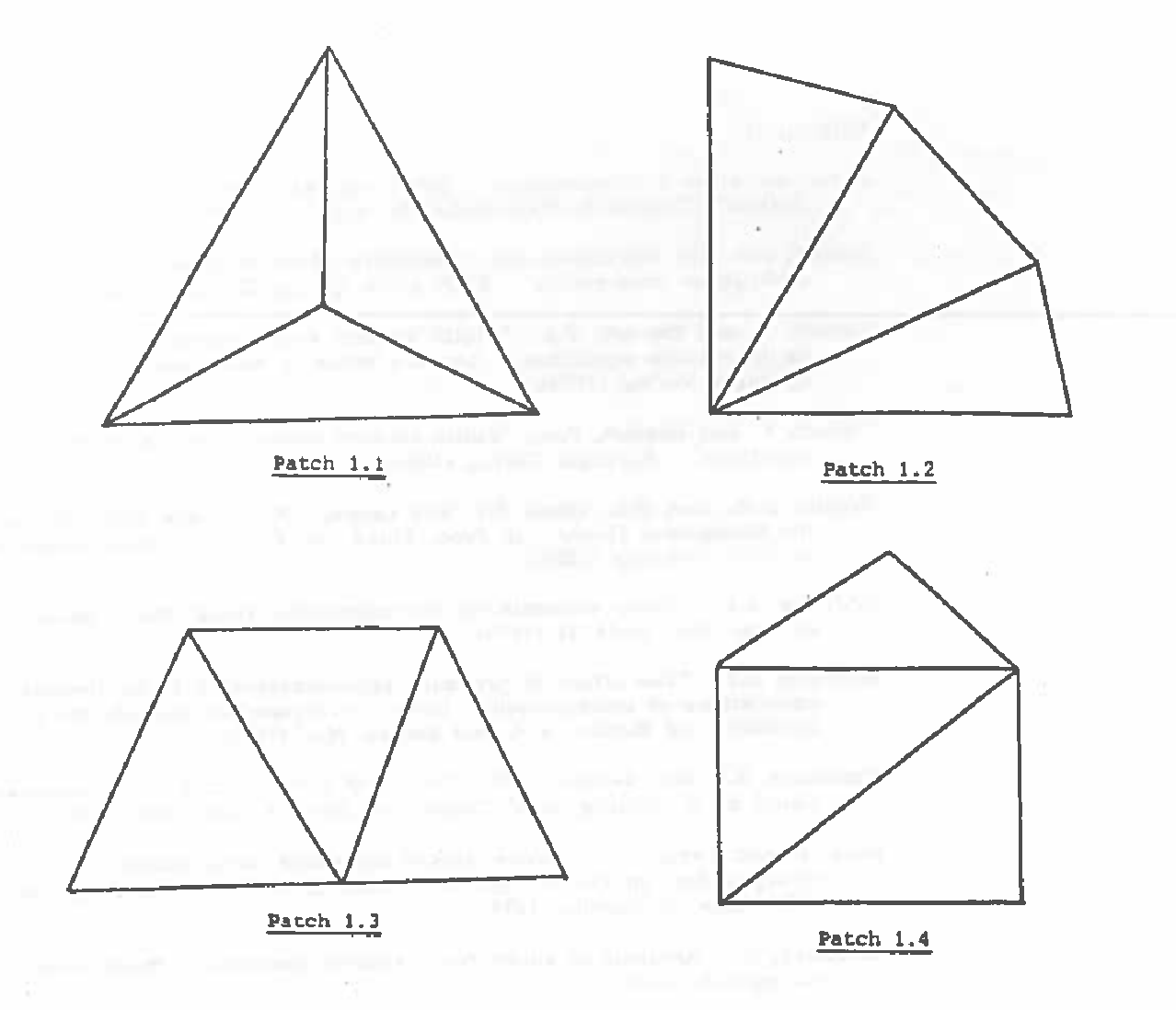}
\end{center}
\caption{Three-element patches.}\label{fig:01}
\end{figure}

To prove patch stability for a class of patches $\esM$, we first consider a typical patch $M \in \esM$. For this patch we define
\begin{alignat}2
&\text{(i)}   &\VhM &= \{v \in (H^1_0(M))^2 \mid v_i \in P_2(\triangle) \text{ for }
 i = 1,2 \text{ and } \forall\, \triangle \in M \},\label{eq:10}\\
&\text{(ii) } &\PhM &= \{p \mid p=p_0+p_1,\ p_0 \in P_0(\triangle)\ \forall\, \triangle \in M,\
p_1 \in C(\overline{M}),\ p_1 \in P_1(\triangle)\ \forall\,  \triangle \in M \}. \label{eq:11}
\end{alignat}
The first step in proving patch stability is to find a subspace $\RhM$ of $\PhM$ that satisfies the condition
\begin{equation}\label{eq:12}
\inf_{p\in \PhM} \sup_{v\in \VhM}
\left\{ \frac{(\div v, p)_{M}}{|v|_{1,M}\, \|p\|_{L^2(M)} } \right\} \geq \beta_M >0.
\end{equation}
The condition \eqref{eq:12} is equivalent to the condition
\begin{equation}\label{eq:13}
p \in \RhM,\quad (\div v, p)_M = 0\quad \forall\, v \in \VhM \quad\Longrightarrow\quad p=0.
\end{equation}
This condition can be established by looking at the null space of the matrix $B$ defined by
\begin{equation}\label{eq:14}
v \in \VhM,\quad p \in \PhM, \quad (\div v, p)_M = \underline{v}^T B \underline{p},
\end{equation}
where $\underline{v}$ is the vector of nodal coordinates determining the finite element function $v \in \VhM$ and 
$\underline{p}$ is the vector of nodal coordinates determining $p \in \PhM$. 
We will then show that the constraints to produce $\RhM$ from
$\PhM$ annihilate this null space. Such a process will establish \eqref{eq:12} for the particular patch $M$ and with the
particular definition of $\RhM$.

Both of the three-element patches 1.1 and 1.2 are important in subsequent sections of this paper and we will 
define an $\RhM$ and show that the constraints annihilate the null space in both cases.

\subsection{The patch 1.1}\label{sec:2.1}

A typical patch of this type, type~1, is illustrated\footnote{\rblf{\textit{Added in 2020\/}: 
The hand-drawn figures are reproduced here exactly as in the original report.}} in figure~\ref{fig:02}.
For this patch we define
\begin{equation}\label{eq:15}
\RhM = \{p \mid p \in \PhM,\ p_0 \in \Lto(M),\ p_1 \in \Lto(M) \}.
\end{equation}
\begin{figure}[htb]
\begin{center}
\includegraphics[width=.7\textwidth]{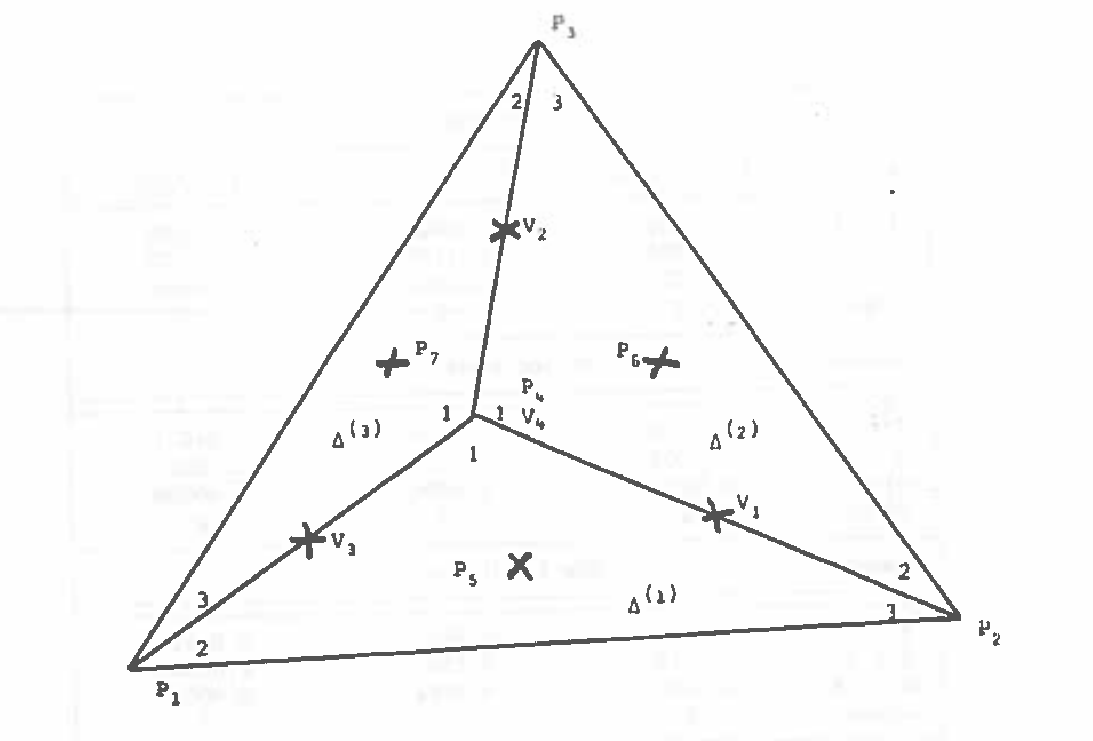}
\end{center}
\caption{A patch of type 1.}\label{fig:02}
\end{figure}
The patch has three elements $(\tri{i})^3_{i=1}$ and we have
\begin{align*} 
\underline{v}^T & = \{(V_1)_1, (V_1)_2, (V_2)_1, (V_2)_2, (V_3)_1, (V_3)_2, (V_4)_1, (V_4)_2, \},\\
\underline{p}^T & = \{P_1, P_2, \dots, P_7\},
\end{align*}
with $B$ an $8\times7$ matrix. We denote by $|\tri{i}|$ and $|M|$ the area of $\tri{i}$ and $M$
respectively and by the two-dimensional vector $b^{(i)}_j$ in $\tri{i}$ the usual values
\begin{equation*} 
(b^{(i)}_1)_1 = y^{(i)}_2 - y^{(i)}_3,\quad (b^{(i)}_1)_2 = x^{(i)}_3 - x^{(i)}_2, \text{ etc.}
\end{equation*}
with the local nodes of each element illustrated in figure~\ref{fig:02}. The matrix $B$ is given by
\begin{equation}\label{eq:16}
\frac16 
\begin{bmatrix}
-\bb12      & \bb21+\bb11 & -\bb23     & \bb22+\bb13 & -4\bb12 & -4\bb23 & 0      \\[4pt]
-\bb33      & -\bb22      & \bb31+\bb21& \bb32+\bb23 & 0       & -4\bb22 & -4\bb33\\[4pt]
\bb11+\bb31 & -\bb13      & -\bb32     & \bb12+\bb33 & -4\bb13 & 0       & -4\bb32\\[4pt]
0           & 0           & 0          & 0           & -\bb11  & -\bb21  & -\bb31
\end{bmatrix}
\end{equation}
and has null space spanned by
\begin{equation}\label{eq:17}
\begin{split}
S^T_1 &= (1,1,1,1,0,0,0),\\
S^T_2 &= (0,0,0,0,1,1,1).
\end{split}
\end{equation}
We denote by $S$ the $7\times2$ matrix, the first column of which is $S_1$ and the second is $S_2$. The constraints 
that give $\RhM$ from $\PhM$ are
\begin{enumerate}[(i)]
\item $\int_M p_0\,d\Omega = 0$,
\item $\int_M p_1\,d\Omega = 0$,
\end{enumerate}
which constrain the vector $\underline{p}$ by
\begin{enumerate}[(i)]
\item $P_5|\tri1| + P_6|\tri2| + P_7|\tri3| = 0$,
\item $P_1(|\tri1|+ |\tri3|)+ P_2(|\tri2|+|\tri1|) + P_3(|\tri3|+|\tri2|) + P_4|M| = 0$,
\end{enumerate}
which we write as
\begin{equation}\label{eq:18}
H \underline{p} = \underline{0},
\end{equation}
where $H$ is a $2\times7$ matrix. The only vector in the null space of $B$ that satisfies both constraints is the 
zero vector if the matrix
\begin{equation}\label{eq:19}
C_M= H S
\end{equation}
is of full column rank. Here $C_M$ is given by
\begin{equation}\label{eq:20}
\begin{bmatrix}
 0   & |M| \\
3|M| & 0
\end{bmatrix}.
\end{equation}
Clearly \eqref{eq:20} is nonsingular, therefore for any patch $M$ of type~1 with $\RhM$ given by \eqref{eq:15}, 
the inequality \eqref{eq:12} is satisfied provided $\RhM$ is nonempty. In section~\ref{sec:3} we will construct 
functions belonging to $\RhM$.

\subsection{The patch 1.2}\label{sec:2.2}

A typical patch of this type, type 2, is illustrated in figure~\ref{fig:03} with $\underline{v}$ a four-dimensional 
vector and $\underline{p}$ eight dimensional.
\begin{figure}[htb]
\begin{center}
\includegraphics[width=.5\textwidth]{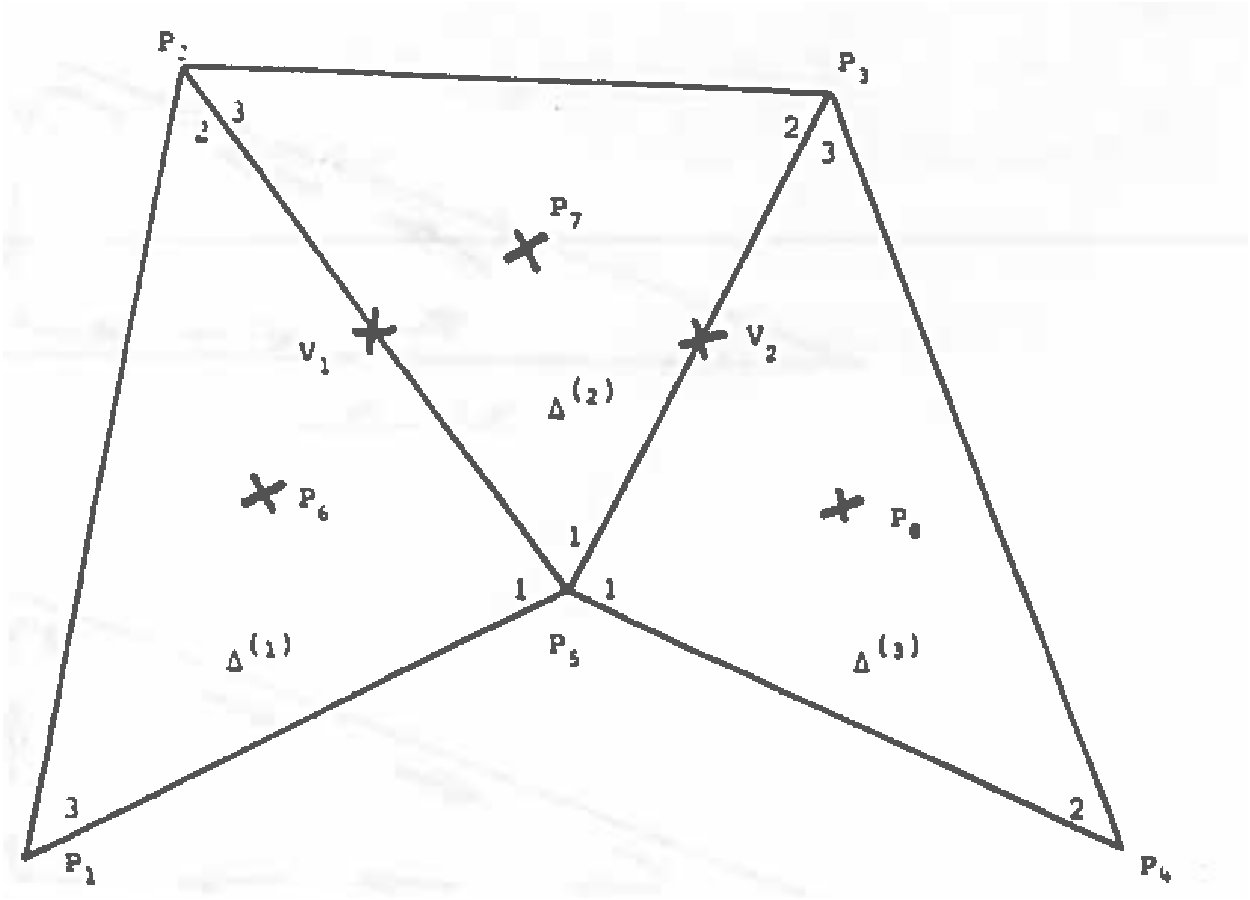}
\end{center}
\caption{A patch of type 2.}\label{fig:03}
\end{figure}
The matrix $B$ is the $4\times8$ matrix
\begin{equation}\label{eq:21}
\frac16
\begin{bmatrix}
-\bb13 & \bb21+\bb11 & -\bb22      & 0      & \bb23+\bb12 & -4\bb13 & -4\bb22 & 0      \\[4pt]
0      & -\bb23      & \bb31+\bb21 & -\bb32 & \bb33+\bb22 & 0       & -4\bb23 & -4\bb32
\end{bmatrix}
\end{equation}
which has a null space spanned by
\begin{equation}\label{eq:22}
\begin{split}
S^T_1 &=(1,1,1,1,1,0,0,0),\\
S^T_2 &=(0,0,0,0,0,1,1,1),\\
S^T_3 &=(4,0,0,0,0,-1,0,0),\\
S^T_4 &=(0,0,0,4,0,0,0,-1).
\end{split}
\end{equation}
Here we note that the matrix $S$ is an $8\times4$ matrix, the $i$th column of which is $S_i$. It is not possible
for the two constraints to produce $\RhM$ given by \eqref{eq:15} to give a matrix $C_M$ of full column rank
(because with these constraints $C_M$ is a $2\times4$ matrix). Thus we need to apply further constraints
on $\PhM$ to annihilate the null space. We define $\RhM$ by
\begin{equation}\label{eq:23}
\begin{split}
\RhM = \{p \mid p \in \PhM,\ p_0 \in \Lto(M),\ p_1 \in \Lto(M), \\ \qquad\qquad\qquad  p \in \Lto(\tri{i})\;
\forall\, \tri{i} \text{ with two sides on the boundary of }& M \}.
\end{split}
\end{equation}
Thus the constraints are
\begin{enumerate}[(i)]
\item $\int_M p_0 \, d\Omega = 0 \Longrightarrow P_6|\tri1| + P_7|\tri2| + P_8|\tri3| = 0$,
\item $\int_M p_1 \, d\Omega = 0 \Longrightarrow P_1|\tri1| + P_2(|\tri1|+|\tri2|) + P_3(|\tri2|+|\tri3|) 
         + P_4|\tri3| + P_5|M| = 0$,
\item $\int_{\tri1} p \, d\Omega = 0 \Longrightarrow P_1 + P_2 + P_5+ 3P_6 = 0$,
\item $\int_{\tri3} p \, d\Omega = 0 \Longrightarrow P_3 + P_4 + P_5 + 3P_8 = 0$,
\end{enumerate}
giving us the constraint equation of the form \eqref{eq:18}. Thus, here,
\begin{equation}\label{eq:24}
C_M = H S =
\begin{bmatrix}
0    & |M| & -|\tri1| & -|\tri3|\\[4pt]
3|M| & 0   & 4|\tri1| & 4|\tri3|\\[4pt]
3    & 3   & 1        &0        \\[4pt]
3    & 3   & 0        & 1
\end{bmatrix}.
\end{equation}
Clearly, \eqref{eq:24} is nonsingular, therefore for any patch $M$ of the type~2 with $\RhM$ given by \eqref{eq:23}, 
the inequality \eqref{eq:12} is satisfied provided $\RhM$ is nonempty. In section~\ref{sec:4} 
we will construct functions belonging to $\RhM$ for this patch.

\subsection{Stability over classes of topologically equivalent patches}\label{sec:2.3}

In this section we shall use the important result in Stenberg \cite{St84}, who observed that, 
given a class $\esM$ of topologically equivalent patches which satisfy \eqref{eq:12} for every $M \in \esM$, then 
the value of
\begin{equation}\label{eq:25}
\beta_M = \inf_{p\in\RhM} \sup_{v\in\VhM}
\left\{ \frac{(\div v, p)_M}{|v|_{1,M}\, \|p\|_{L^2(M)}} \right\}
\end{equation}
is independent of a transformation of the form
\begin{equation}\label{eq:26}
\tilde{\underline{x}} = A(\underline{x} - \underline{\alpha}),
\end{equation}
where $A$ and $\alpha$ are fixed. This, together with the regularity constraint \eqref{eq:06} allows us to represent the
whole range of values of $\beta_M$ for $M\in\esM$ as a  function over a compact set $R$, every point of which
represents a patch (or indeed many patches) belonging to $\esM$. Thus there exists $\beta>0$ such that
\begin{equation}\label{eq:27}
\beta = \min_{R} (\beta_M) = \min_{M\in\esM} (\beta_M)
\end{equation}
for which
\begin{equation}\label{eq:28}
\inf_{p\in\RhM} \sup_{v\in\VhM}
\left\{ \frac{(\div v, p)_M}{|v|_{1,M}\, \|p\|_{L^2(M)}} \right\} \geq \beta > 0
\end{equation}
for every $M\in\esM$ with $\beta$ independent of $M$ and $h$ and depending only on the topology of $\esM$ 
and $\sigma$. Thus here, for a given $\sigma$, we have two classes $\beps^\sigma_1$ and $\beps^\sigma_2$ 
of elements topologically equivalent to the patches of type~1, and~2 respectively, and the inequality \eqref{eq:28} 
holds for these two classes with
different values of $\beta$. We will actually use an equivalent form of \eqref{eq:28}, namely for each $p\in\RhM$, 
there exists $v_M\in\VhM$
such that
\begin{equation} \label{eq:29}
\begin{split}
(\div v_M, p)_M &\geq \beta \|p\|_{L^2(M)}^2,\\[-2pt]
|v_M|_{1,M} &\leq C \|p\|_{L^2(M)},
\end{split}
\end{equation}
or, for $\gamma > 0$,
\begin{equation}  \label{eq:30}
\begin{split}
(\div (\gamma v_M), p)_M &\geq \beta\gamma \|p\|_{L^2(M)}^2,\\[-2pt]
|(\gamma v_M)|_{1,M} &\leq C\gamma \|p\|_{L^2(M)},
\end{split}
\end{equation}
with $\beta$ and $C$ depending only on the class of patches and on $\sigma$.

\section{Global stability of grids made up of patches of the type 1}\label{sec:3}

In order to use the results of the previous section we construct an operator $\PiM p$ by
\begin{align} 
\PiM p & =  \frac{1}{|M|}  \int_M p  \,d\Omega = k_0 + k_1,\label{eq:31}\\
& \text{where}\quad k_0 =  \frac{1}{|M|} \int_M {p_0} \,d\Omega  ,\quad k_1 
=   \frac{1}{|M|} \int_M {p_1} \,d\Omega. \label{eq:32}
\end{align}
Thus, since
\begin{equation*}
\int_M (p_0 - k_0) \, d\Omega = 0, \quad \int_M (p_1 - k_1)\,d\Omega = 0
\end{equation*}
then using the argument from Stenberg \cite{St84} we can establish the following theorem and corollary.

\begin{thm}\label{thm:3.1}
For every $p\in\PhM$,
\begin{enumerate}[\upshape(A)]
  \item $p - \PiM p \in \RhM$, $\RhM$ defined by \eqref{eq:15},
  \item $(\div v, \PiM p) = 0\quad \forall\, v \in \VhM$.
\end{enumerate}
\end{thm}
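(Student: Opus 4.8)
The plan is to verify the two assertions directly from the definitions, the key observation being that $\PiM p$ is, by its construction in \eqref{eq:31}, a single constant on the whole of $M$.

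For part (A), I would fix a decomposition $p = p_0 + p_1$ of the given $p\in\PhM$, with $p_0\in P_0(\tri{i})$ for each element and $p_1\in C(\overline{M})$ piecewise linear, and let $k_0,k_1$ be the constants defined in \eqref{eq:32}. Then
\[
p - \PiM p = (p_0 - k_0) + (p_1 - k_1),
\]
and this is again an admissible decomposition of an element of $\PhM$: subtracting the constant $k_0$ keeps the first summand piecewise constant, and subtracting the constant $k_1$ keeps the second summand continuous and piecewise linear. Since $\int_M k_i\,d\Omega = k_i|M|$, the definition \eqref{eq:32} of $k_0$ and $k_1$ yields $\int_M (p_0-k_0)\,d\Omega = 0$ and $\int_M (p_1-k_1)\,d\Omega = 0$, so both components lie in $\Lto(M)$; by \eqref{eq:15} this is exactly what it means for $p - \PiM p$ to belong to $\RhM$.

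For part (B), I would use that $\PiM p$ equals the constant $k_0 + k_1$ on $M$, so that for every $v\in\VhM$,
\[
(\div v, \PiM p)_M = (k_0+k_1)\int_M \div v\,d\Omega = (k_0+k_1)\int_{\pr M} v\cdot n\,ds = 0,
\]
the middle step being the divergence theorem on $M$ applied to $v\in(H^1(M))^2$, and the last step using that $v\in(H^1_0(M))^2$ forces the trace of $v$ on $\pr M$ to vanish.

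I do not expect a genuine obstacle here: the statement is essentially the remark that Stenberg's mean-value operator respects the two-component structure $p = p_0 + p_1$ of $\PhM$, so the proof is a one-line computation for each part. The only point meriting a word of care is that the decomposition of an element of $\PhM$ into a piecewise-constant part and a continuous piecewise-linear part is not unique — an additive constant may be traded between $p_0$ and $p_1$ — so the clause ``$p_0\in\Lto(M)$'' in \eqref{eq:15} must be read as ``$p$ admits a decomposition with this property.'' The computation in part (A) exhibits precisely such a decomposition for $p - \PiM p$, so this ambiguity is harmless.
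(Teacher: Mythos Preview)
Your proof is correct and matches the paper's approach exactly: the paper itself merely records the two identities $\int_M(p_0-k_0)\,d\Omega=0$ and $\int_M(p_1-k_1)\,d\Omega=0$ immediately before stating the theorem and then says ``using the argument from Stenberg~\cite{St84}'', which is precisely the computation you have written out in full. Your remark on the non-uniqueness of the $p_0+p_1$ splitting and the correct reading of \eqref{eq:15} is a valid clarification that the paper leaves implicit.
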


\begin{cor}[Corollary to Theorem~\ref{thm:3.1}]
For each $p\in \PhM$, there exists  $v_M \in \VhM$ such that
\begin{equation*}
\begin{split}
\big(\div v_M, p\big)_M &\geq \beta_1 \|p - \PiM p\|_{L^2(M)}^2,\\
|v_M|_{1,M} &\leq C_1 \|p - \PiM p\|_{L^2(M)},
\end{split}
\end{equation*}
where $\beta_1$ and $C_1$ are independent of $p$ and $v_M$ but are dependent on $\sigma$.
\end{cor}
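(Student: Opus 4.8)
The plan is to derive the corollary directly from the patch inf--sup estimate \eqref{eq:29} for the class $\beps^\sigma_1$, using the two assertions of Theorem~\ref{thm:3.1} to bridge between $p$ and $p-\PiM p$. Given $p\in\PhM$, I would first set $\tilde p := p-\PiM p$ and invoke part~(A) of Theorem~\ref{thm:3.1} to conclude $\tilde p\in\RhM$, with $\RhM$ as in \eqref{eq:15}. Since $M$ is a patch of type~1, the stability statement \eqref{eq:29} applies to $\tilde p$: there exists $v_M\in\VhM$ with $(\div v_M,\tilde p)_M\ge\beta_1\|\tilde p\|_{L^2(M)}^2$ and $|v_M|_{1,M}\le C_1\|\tilde p\|_{L^2(M)}$, where $\beta_1$ and $C_1$ are the constants $\beta$ and $C$ attached to the class $\beps^\sigma_1$ (hence independent of the particular $M$ and of $h$, but depending on $\sigma$).

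It then remains to pass from $\tilde p$ to $p$ in the first inequality. Writing $p=\tilde p+\PiM p$ and using bilinearity, $(\div v_M,p)_M=(\div v_M,\tilde p)_M+(\div v_M,\PiM p)_M$, and the last term is zero by part~(B) of Theorem~\ref{thm:3.1}; equivalently, $\PiM p$ equals the constant $\frac{1}{|M|}\int_M p\,d\Omega$ and $v_M\in(H^1_0(M))^2$ has vanishing normal trace on $\partial M$, so $\int_M\div v_M\,d\Omega=0$. Therefore $(\div v_M,p)_M=(\div v_M,\tilde p)_M\ge\beta_1\|p-\PiM p\|_{L^2(M)}^2$, while the gradient bound $|v_M|_{1,M}\le C_1\|p-\PiM p\|_{L^2(M)}$ is already in the desired form. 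The same $v_M$ thus witnesses both inequalities of the corollary.

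I do not expect a genuine obstacle here: parts~(A) and~(B) of Theorem~\ref{thm:3.1} are precisely the ingredients required, and the argument is the familiar one (due to Stenberg) that turns a local inf--sup condition together with the mapping properties of a mean-value operator into the stated estimate. The only points needing a little care are bookkeeping ones: that the constants used are those of the class $\beps^\sigma_1$ rather than of a single patch, and that \eqref{eq:29} is applied under the standing assumption that $\RhM$ is nonempty, which is justified in this section by the explicit construction of members of $\RhM$.
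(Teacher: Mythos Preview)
Your proposal is correct and is precisely the argument the paper has in mind: the paper does not write out a proof for this corollary but simply states that it follows ``using the argument from Stenberg~\cite{St84}'', and what you have written is exactly that argument---apply \eqref{eq:29} to $\tilde p=p-\PiM p\in\RhM$ via part~(A), then use part~(B) to replace $\tilde p$ by $p$ in the divergence pairing. Your remarks about the constants coming from the class $\beps^\sigma_1$ and about $\RhM$ being nonempty are also in line with the paper's treatment.
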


The proof of global stability on a grid made up of patches of the type~1 now follows   that given by 
Stenberg \cite{St84} or Boland \& Nicolaides \cite{BN83}. Moreover, we have the optimal rates of convergence 
to the solution $(u,q)$ of the Stokes equation\footnote{\rblf{\textit{Added in 2020\/}: Assuming additional
smoothness  (that is, $H^3$ regularity) of the target solution.}}, namely
\begin{equation}\label{eq:33}
|u-u^h|_{1,\Omega} + \|q-q^h\|_{L^2(\Omega)} \leq C h^2 (|u|_{3,\Omega} + \|q\|_{2,\Omega})
\end{equation}
where $(u^h,q^h)$ is the numerical solution in $V^h\times P^h$.

\section{Further results on the patch of the type 2}\label{sec:4}

Before we establish global stability and optimal rates of convergence, there are further results required for a
patch of this type. Let $M$ be a patch of the type~2, we construct an operator $\Pi^{(2)}_M$ on the set $\PhM$ by
\begin{equation}\label{eq:34}
\Pi^{(2)}_M p  = 
\begin{cases}
k^{(2)} \enskip \text{in } \tri2 ,\\
      4k^{(2)} \big(1-3L^{(i)}\big) -3k^{(i)}\big(1-4L^{(i)}\big) \enskip \text{in } \tri{1}  \text{ and }  \tri{3} ,
\end{cases}
\end{equation}
where
\begin{enumerate}[(i)]
\item $\tri{i}$ is defined in figure~\ref{fig:03},
\item $L^{(i)}$ is the areal coordinate in $\tri{i}$, $i = 1$ and $3$, equal to 1 at
the intersection of the two sides on the boundary $\partial M$ of $M$,\footnote{\rblf{\textit{Added in 2020\/}: 
Thus with the numbering shown in figure~3, we have $L^{(i)}=L_3$ in ${\tri{1}}$ and  $L^{(i)}=L_2$ in ${\tri{3}}$.}}
\item \hfill \makebox[0pt][r]{%
\begin{minipage}[b]{\textwidth}
\begin{equation}\label{eq:35}
k^{(i)} = \frac{1}{|\tri{i}|} \int_{\tri{i}} \! {p\,d\Omega}\quad   \text{for } i=1,2,3.
\end{equation}
\end{minipage}}
\end{enumerate}

\begin{thm}\label{thm:4.1}
For every $p \in \PhM$ then
\begin{enumerate}[\upshape(A)]
\item \hfill \makebox[0pt][r]{%
\begin{minipage}[b]{\textwidth}
\begin{equation}\label{eq:36}
p - \Pi^{(2)}_M \mu \in \RhM,\quad \RhM \textup{ defined by \eqref{eq:23}},
\end{equation}
\end{minipage}}
\item \hfill \makebox[0pt][r]{%
\begin{minipage}[b]{\textwidth}
\begin{equation}\label{eq:37}
\big(\div v, \Pi^{(2)}_M p\big)_M = 0\quad \forall\, v \in \VhM.
\end{equation}
\end{minipage}} 
\end{enumerate}
\end{thm}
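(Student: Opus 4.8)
(Here $\Pi^{(2)}_M\mu$ in \eqref{eq:36} should read $\Pi^{(2)}_M p$; I prove the statement in that form, in the spirit of Theorem~\ref{thm:3.1}.) The plan is to exploit the two design features built into $\Pi^{(2)}_M$: on each of $\tri1,\tri2,\tri3$ it has the same integral average as $p$, and its nodal coordinate vector lies in the null space spanned by $S_1,\dots,S_4$ of \eqref{eq:22}. Part (A) will follow from the first feature, part (B) from the second together with \eqref{eq:14}.

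For part (A) I would first check $\Pi^{(2)}_M p\in\PhM$: it is affine on each $\tri{i}$, constant $(=k^{(2)})$ on $\tri2$, and across the interior edge between $\tri1$ and $\tri2$ (where $L^{(1)}$ vanishes identically) its jump is the constant $3(k^{(2)}-k^{(1)})$, and similarly $3(k^{(2)}-k^{(3)})$ across the other interior edge; hence it splits as a continuous piecewise linear function plus a piecewise constant one. Next, using $\int_{\tri{i}}L^{(i)}\,d\Omega=|\tri{i}|/3$, one gets $\int_{\tri1}(1-3L^{(1)})\,d\Omega=0$ and $\int_{\tri1}(1-4L^{(1)})\,d\Omega=-|\tri1|/3$, so that $\int_{\tri1}\Pi^{(2)}_M p\,d\Omega=k^{(1)}|\tri1|=\int_{\tri1}p\,d\Omega$ by \eqref{eq:35}, and likewise on $\tri3$; on $\tri2$ the equality of averages is immediate. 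Thus $q:=p-\Pi^{(2)}_M p$ satisfies $\int_{\tri1}q=\int_{\tri3}q=0$, which are constraints (iii), (iv) in \eqref{eq:23}, and summing the three element identities gives $\int_M q\,d\Omega=0$. Since a function in $\PhM$ that is both piecewise constant and continuous piecewise linear on a connected patch is globally constant, the $P_0$/$P_1$ splitting of any element of $\PhM$ is unique up to an additive constant; consequently $\int_M q=0$ is precisely the condition that $q$ admits a splitting $q=q_0+q_1$ with $\int_M q_0=\int_M q_1=0$, i.e.\ constraints (i), (ii). Hence $q\in\RhM$.

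For part (B) I would read off the nodal coordinate vector of $\Pi^{(2)}_M p$ from the labelling of figure~\ref{fig:03} (with $L^{(1)}=L_3$ in $\tri1$ and $L^{(3)}=L_2$ in $\tri3$), fixing the splitting constant by a parameter; one finds
\[
\underline{\Pi^{(2)}_M p}=\pi_c\,S_1+\bigl(k^{(2)}-\pi_c\bigr)S_2+3\bigl(k^{(1)}-k^{(2)}\bigr)S_3+3\bigl(k^{(3)}-k^{(2)}\bigr)S_4
\]
for every value of the free constant $\pi_c$. This vector therefore lies in the null space of $B$, and by \eqref{eq:14} we get $\bigl(\div v,\Pi^{(2)}_M p\bigr)_M=\underline v^T B\,\underline{\Pi^{(2)}_M p}=0$ for all $v\in\VhM$, which is \eqref{eq:37}. (Alternatively one could integrate by parts elementwise and cancel the interior-edge jump terms against the gradient terms, but the null-space route is shorter given that \eqref{eq:22} is already in hand.)

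There is no real obstacle here: the content is entirely in the two boxed properties of $\Pi^{(2)}_M$, and once they are isolated everything reduces to the areal-coordinate integrals and a $4\times4$ linear-algebra check against \eqref{eq:22}. I expect the only nuisance to be bookkeeping — keeping the $P_0$/$P_1$ decomposition consistent between $p$ and $\Pi^{(2)}_M p$ so that constraints (i), (ii) are interpreted correctly, and correctly identifying which barycentric coordinate of $\tri1$ and $\tri3$ serves as $L^{(i)}$ from figure~\ref{fig:03}.
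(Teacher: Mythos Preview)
Your argument is correct in both parts.  For part~(A) your treatment is essentially the paper's: the paper writes out an explicit $p_0/p_1$ decomposition of $\Pi^{(2)}_M p$ carrying a free additive constant $\alpha$ (equations \eqref{eq:38}--\eqref{eq:40}) and then verifies the four constraints of \eqref{eq:23} directly; your ``uniqueness up to a constant'' observation is just the coordinate-free version of that $\alpha$-construction, and your verification of constraints (iii),(iv) via $\int_{\tri{i}}(1-3L^{(i)})=0$, $\int_{\tri{i}}(1-4L^{(i)})=-|\tri{i}|/3$ is exactly what underlies the paper's check.

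For part~(B), however, you take a genuinely different route.  The paper does not invoke the null space \eqref{eq:22} at all; instead it expands $(\div v,\Pi^{(2)}_M p)_M$ as in \eqref{eq:41}, kills the constant term using $v\in (H^1_0(M))^2$, and for the remaining two terms uses that on $\tri1$ (resp.\ $\tri3$) any $v\in\VhM$ is necessarily a multiple of the edge bubble $4L_1L_2$ (since two sides lie on $\partial M$), together with the areal identity $\int_{\triangle}L_i(1-4L_j)\,d\Omega=0$ for $i\neq j$.  Your null-space argument is cleaner in that it recycles the linear algebra already recorded in \eqref{eq:22} and \eqref{eq:14}, and it avoids any integration; the paper's direct computation, on the other hand, is self-contained and makes transparent \emph{why} the particular correction factor $(1-4L^{(i)})$ was chosen in the definition \eqref{eq:34} of $\Pi^{(2)}_M$.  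Both proofs are short and equally rigorous.
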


\begin{proof}
\noindent (A)\enskip 
Clearly $p = \Pi^{(2)}_M p \in \PhM$, thus we have to show that $p-\Pi^{(2)}_M p$ satisfies the
constraints from $\PhM$ to $\RhM$. We write
\begin{align}
\left[\Pi^{(2)}_M p\right]_0  &=
\begin{cases}
 (1-\alpha)k^{(2)} \enskip \text{in } \tri2 \\
      (1-\alpha)k^{(2)} + 3\big(k^{(2)}-k^{(i)}\big) \enskip \text{in } \tri{i},\quad i = 1 \text{ and } 3,
\end{cases} \label{eq:38}
\\
\left[\Pi^{(2)}_M p\right]_1 & =
\begin{cases}
 \alpha k^{(2)} \enskip \text{in } \tri2 \\
       \alpha k^{(2)} - 12\big(k^{(2)}-k^{(i)}\big)L^{(i)} \enskip \text{in } \tri{i},\quad i = 1 \text{ and } 3,
\end{cases}\label{eq:39}
\end{align}
with
\begin{equation}\label{eq:40}
\alpha = \frac{\int_M p_1\,d\Omega + 4 (k^{(2)}-k^{(1)})|\tri1| + 4 (k^{(2)}-k^{(3)})|\tri3|}
{k^{(2)}|M|}
\end{equation}
if $k^{(2)}\neq 0$. (If $k^{(2)}=0$ then the definition of $\big[\Pi^{(2)}_M p\big]_i$ for $i = 0,1$ is independent of $\alpha$.)

We now observe that
\begin{alignat*}2
&\int_M p_0 - \left[\Pi^{(2)}_M p\right]_0 \,d\Omega = 0,&\quad & \int_M p_1 - \left[\Pi^{(2)}_M p\right]_1 \,d\Omega = 0, \\
&\int_{\tri1} p - \Pi^{(2)}_M p \,d\Omega = 0,&\quad & \int_{\tri3} p - \Pi^{(2)}_M p \,d\Omega = 0.
\end{alignat*}
Thus we have established part (A).

\medskip
\noindent (B)\enskip
Taking the left-hand side of \eqref{eq:37},
\begin{equation}\label{eq:41}
\begin{split}
\big(\div v, \Pi^{(2)}_M p\big)_M & = \big(\div v, k^{(2)}\big)_M + 3\big(k^{(2)}-k^{(1)}\big)\big(\div v,(1-4L^{(1)})\big)_{\tri1} \\
  & \quad + 3\big(k^{(2)}-k^{(3)}\big) \big(\div v,(1-4L^{(3)})\big)_{\tri3}.
\end{split}
\end{equation}
The first term in \eqref{eq:41} is zero because $v\in\VhM$. If $L^{(1)}$ is $L^{(1)}_3$ in $\tri1$ 
then $v=V_1(4L^{(1)}_2 L^{(1)}_1)$, thus we see immediately\footnote{\rblf{\textit{Added in 2020\/}: This can be verified
by direction computation;  $\int_{\triangle} L_i (1-4L_j) \, d {\triangle} =0$ whenever $i\neq j$.}} that the second
term in \eqref{eq:41} is zero and so therefore is the third. 
\end{proof}

\begin{cor}[Corollary to Theorem \ref{thm:4.1}]
For each $p\in\PhM$ there exists  $v_M \in \VhM$ such that
\begin{align}\label{eq:42}
(\div v_M, p)_M & \geq \beta_2 \|p-\mu_M\|_{L^2(\tri2)}^2 \, ,\\
\label{eq:43}
|v_M|_{1,M} & \leq C_2 \|p-\mu_M\|_{L^2(\tri2)}\, ,
\end{align}
with
\begin{equation}\label{eq:44}
\mu_M = \frac{1}{|\tri2|} \int_{\tri2} p\,d\Omega,
\end{equation}
where $\beta_2$ and $C_2$ depend only on the regularity constant $\sigma$.
\end{cor}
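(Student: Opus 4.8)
The plan is to combine Theorem~\ref{thm:4.1} with the inf--sup estimate \eqref{eq:28} for the class $\beps^\sigma_2$, but to read that estimate off in its supremum form and then rescale the optimal test velocity down to the right size.

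First I would set $\tilde p := p - \Pi^{(2)}_M p$. By part~(A) of Theorem~\ref{thm:4.1} this lies in $\RhM$, and by the definition \eqref{eq:34} of $\Pi^{(2)}_M$ we have $\Pi^{(2)}_M p = k^{(2)} = \mu_M$ on $\tri2$, hence $\tilde p = p - \mu_M$ there; in particular
\[
\|\tilde p\|_{L^2(M)} \ \geq\ \|\tilde p\|_{L^2(\tri2)} \ =\ \|p-\mu_M\|_{L^2(\tri2)}.
\]
Applying \eqref{eq:28} at $\tilde p\in\RhM$ (with $\beta$ the inf--sup constant of $\beps^\sigma_2$, depending only on $\sigma$) gives
\[
\sup_{v\in\VhM}\frac{(\div v,\tilde p)_M}{|v|_{1,M}} \ \geq\ \beta\,\|\tilde p\|_{L^2(M)} \ \geq\ \beta\,\|p-\mu_M\|_{L^2(\tri2)}.
\]
Since $\VhM$ is finite dimensional the supremum is attained, at some $v^\ast\in\VhM$ which we may take with $|v^\ast|_{1,M}=1$, so that $(\div v^\ast,\tilde p)_M\geq\beta\,\|p-\mu_M\|_{L^2(\tri2)}$. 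Now put $v_M := \|p-\mu_M\|_{L^2(\tri2)}\,v^\ast \in\VhM$. Then $|v_M|_{1,M} = \|p-\mu_M\|_{L^2(\tri2)}$, which is \eqref{eq:43} with $C_2=1$; and, using part~(B) of Theorem~\ref{thm:4.1} (so $(\div v_M,\Pi^{(2)}_M p)_M=0$ and therefore $(\div v_M,p)_M=(\div v_M,\tilde p)_M$),
\[
(\div v_M,p)_M \ =\ \|p-\mu_M\|_{L^2(\tri2)}\,(\div v^\ast,\tilde p)_M \ \geq\ \beta\,\|p-\mu_M\|_{L^2(\tri2)}^2,
\]
which is \eqref{eq:42} with $\beta_2=\beta$. (If $\|p-\mu_M\|_{L^2(\tri2)}=0$ the construction simply returns $v_M=0$ and both inequalities are trivial.) Both $\beta_2$ and $C_2$ then depend only on $\sigma$ through $\beta$.

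The one step that needs care is the construction of $v_M$. The temptation is to apply \eqref{eq:29} directly to $\tilde p\in\RhM$; this produces a $v_M$ with $(\div v_M,p)_M\geq\beta\|\tilde p\|^2_{L^2(M)}$ but only with $|v_M|_{1,M}\leq C\|\tilde p\|_{L^2(M)}$, and $\|\tilde p\|_{L^2(M)}$ cannot in general be bounded by $\|p-\mu_M\|_{L^2(\tri2)}$ --- on $\tri1$ and $\tri3$ the quantity $p-\Pi^{(2)}_M p$ retains the whole behaviour of $p$ (only its mean on each of $\tri1,\tri3$ is removed), governed by $k^{(1)},k^{(3)}$ rather than by $k^{(2)}=\mu_M$ --- so \eqref{eq:43} would be lost. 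What makes both bounds hold simultaneously is the use of the supremum form \eqref{eq:28} followed by the rescaling of $v^\ast$ to size $\|p-\mu_M\|_{L^2(\tri2)}$; the only residual information about $\tilde p$ away from $\tri2$ that is actually needed is the trivial one-sided bound $\|\tilde p\|_{L^2(M)}\geq\|p-\mu_M\|_{L^2(\tri2)}$.
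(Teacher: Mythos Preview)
Your proof is correct and follows essentially the same route as the paper: apply Theorem~\ref{thm:4.1} to land in $\RhM$ with $\tilde p=p-\Pi^{(2)}_M p$ (equal to $p-\mu_M$ on $\tri2$), invoke the patch inf--sup estimate, and rescale the resulting test function so that its $H^1$-seminorm is controlled by $\|p-\mu_M\|_{L^2(\tri2)}$. The paper phrases the rescaling via the factor $\gamma=\|\tilde p\|_{L^2(\tri2)}/\|\tilde p\|_{L^2(M)}$ applied to the $\tilde v_M$ coming from \eqref{eq:29}--\eqref{eq:30}, whereas you normalise the maximiser of \eqref{eq:28} first and then multiply by $\|p-\mu_M\|_{L^2(\tri2)}$; these are equivalent, and your final paragraph correctly isolates the reason the rescaling is needed.
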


\begin{proof}
By Theorem~\ref{thm:4.1} and  inequalities \eqref{eq:30}, for each $p\in \PhM$ there exists  $\tilde{v}_M$
such that, for $\gamma>0$,
\begin{gather*}
(\div (\gamma\tilde{v}_M), p)_M = \big(\div(\gamma\tilde{v}_M), p-\Pi^{(2)}_M p\big) 
\geq \beta_2 \gamma \,  \big\|p-\Pi^{(2)}_M p\big\|_{L^2(M)}^2,  \\
|\gamma\tilde{v}_M|_{1,M} \leq c_2\gamma \big\|p-\Pi^{(2)}_M p\big\|_{L^2(M)}.
\end{gather*}
The result follows by choosing
\begin{equation}\label{eq:45}
\gamma = \big\|p-\Pi^{(2)}_M p\big\|_{L^2(\tri2)} \bigg/ \big\|p-\Pi^{(2)}_M p\big\|_{L^2(M)},
\end{equation}
noting that $\sigma<\gamma<1$, and choosing
$v_M = \gamma \tilde{v}_M$.
\end{proof}

\section{Global stability of grids made up of patches of the type 2}\label{sec:5}

We assume that $\Omega$ is a polygonal region which has been triangulated into $N$ triangular
elements $(E^{(i)})$. We further assume that each element $E^{(i)}$ sits inside an extended patch $M_i$
of the type~2 as element $\tri2$. Firstly we note that these patches overlap. Secondly we note that this 
assumption does exclude some grids of triangles, namely those which either
\begin{enumerate}[(a)]
\item contain triangular elements with two sides on the boundary, or
\item contain a patch of the form 1.1 with that patch having a side on the boundary.
\end{enumerate}
We can see this by looking at figure~\ref{fig:04}, elements $E^{(2)}$ to $E^{(12)}$ all
sit as element $\tri2$ of a patch of the type~2 but elements $E^{(1)}$ and $E^{(13)}$ do not.

\begin{figure}[htb]
\begin{center}
\includegraphics[width=.4\textwidth]{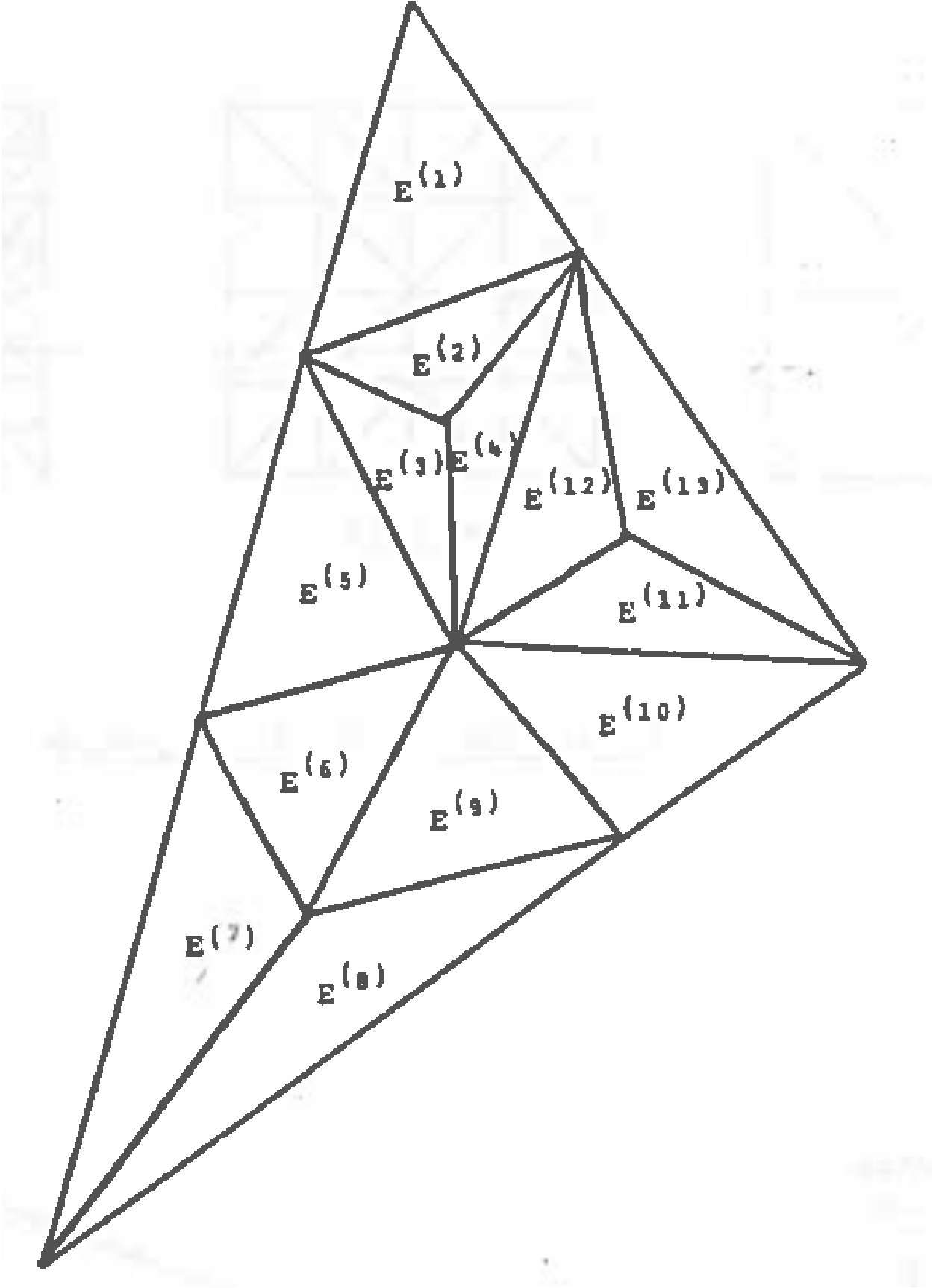}
\end{center}
\caption{Sample triangulation illustrating the restrictions on the grid.}\label{fig:04}
\end{figure}

We consider $\mu\in\Lto(\Omega)$ such that $\mu$ is constant in each $E^{(i)}$, Girault \& 
Raviart \cite{GR79} show that for each $\mu$ there exists  $ \tilde{v} \in V^h$ such that
\begin{equation}\label{eq:46}
\begin{split}
(\div\tilde{v}, \mu)_\Omega & = \|\mu\|_{L^2(\Omega)}^2,\\
|\tilde{v}|_{1,\Omega}    & \leq \tilde{c} \|\mu\|_{L^2(\Omega)}. 
\end{split}
\end{equation}
Let $p\in P^h$, we define $\mu\in\Lto(\Omega)$ such that $\mu$ in the element $E^{(i)}$
is the constant value
\begin{equation}\label{eq:47}
\mu = \frac{1}{|E^{(i)}|} \, \int_{E^{(i)}} p\,d\Omega \quad \text{in } E^{(i)} \text{ for } i=1,\ldots,N;
\end{equation}
thus by the corollary to Theorem~\ref{thm:4.1}, for each $p\in P^h$ there exists  $v_{M_i} \in V^h_{M_i}$
such that
\begin{align}\label{eq:48}
(\div v_{M_i}, p)_{M_i} & \geq \beta_2 \|p-\mu\|_{L^2(E^{(i)})}^2,\\
|v_{M_i}|_{1,M_i}    & \leq C_2 \|p-\mu\|_{L^2(E^{(i)})}.
\label{eq:49}
\end{align}
We note that $\beta_2$ and $C_2$ do not depend on the particular patch $M$ and depend only
on the regularity constant $\sigma$. Thus, for each $p\in P^h$ there exists $\tilde{v}\in V^h$ such that
\begin{align}\label{eq:50}
(\div\tilde{v}, p)_\Omega & \geq \tilde{\beta} \|p-\mu\|_{L^2(\Omega)}^2,\\
|\tilde{v}|_{1,\Omega}    & \leq \tilde{c} \|p-\mu\|_{L^2(\Omega)},
\end{align}
where
\begin{alignat}{2}
\nonumber
\text{(a)}&\enskip & \tilde{v} &= \sum^N_{i=1} \tilde{v}_i,
\\
\label{eq:52}
\text{(b)}&\enskip & \tilde{v}_i& = 
\begin{cases}
v_{M_i} & \text{in } M_i,\\
0 &\text{elsewhere in } \Omega,
\end{cases}
\\
\nonumber
\text{(c)}&\enskip & \tilde{C} &= C_2,\quad \tilde{\beta} = \beta_2.
\end{alignat}

Following Stenberg \cite{St84} the inequalities \eqref{eq:46} and \eqref{eq:50} establish that
for each $p\in P^h$ then
\begin{equation}\label{eq:53}
v = \tilde{v} + \left(\frac{2\tilde{\beta}}{1+  \tilde{C} ^2} \right)\tilde{v}
\end{equation}
satisfies
\begin{align}\label{eq:54}
(\div v, p)_{\Omega} & \geq \left(\frac{\tilde{\beta}}{1+  \tilde{C}^2}\right) \|p\|_{L^2(\Omega)}^2,\\
|v|_{1,\Omega}    & \leq \left(\tilde{C} + \frac{2\tilde{\beta} \tilde{C} }{1+  \tilde{C} ^2}\right) \|p\|_{L^2(\Omega)}.
\label{eq:55}
\end{align}
Thus we have global stability and optimal rates of convergence on all grids that satisfy the regularity constraint
\eqref{eq:06} and restrictions on the triangles mentioned at the beginning of this section.

It is only the former of these restrictions, namely that we must triangulate `into the corners' that is an essential
restriction. By including patches of the form 1.1 in the above argument then the second restriction can be removed but 
now $\tilde{\beta} = \max(\beta_1, \beta_2)$ and $\tilde{C} = \min(C_1, C_2)$. Thus we have established optimal 
convergence rates on all grids of triangles provided the grid has been triangulated into the corners.
Further discussion of this topic when the grid has not been triangulated into the corners is given in the appendix.

\section{Numerical examples}\label{sec:6}

In this section we shall consider two numerical examples. The first is a simple test problem to demonstrate
that optimal convergence rates are achieved and the second is an example where the Taylor--Hood element gives
poor results but the linear plus constant pressure element, which we shall call the 
LC element\footnote{\rblf{\textit{Added in 2020\/}: The mixed approximation method is referred to 
$\bP_2$--$\bP_{-1*}$  in the book by  Elman et al. [{\it Finite Elements and Fast Iterative Solvers},
Oxford University Press, 2014),
and as the  {\it enhanced Hood--Taylor} scheme in the book by Boffi et al. [{\it Mixed Finite 
Element Methods and Applications}, Springer, 2013].}}, gives relatively 
good results.

\subsection{Testing rates of convergence}\label{sec:6.1}

The first test problem is one proposed by Griffiths \& Mitchell~\cite{GM79}. It is an enclosed flow problem
(namely a Stokes flow) in the unit square with solution
\begin{equation}\label{eq:56}
\begin{split}
v_x & = -20xy^3,\\
v_y & = 5y^4 - 5x^4,\\
p   & = -60x^2y + 20y^3 +5.
\end{split}
\end{equation}
Typical grids for this test problem are illustrated in figure~\ref{fig:05} and the solutions (i.e.\ norms
\begin{figure}[htb]
\begin{center}
\includegraphics[width=.6\textwidth]{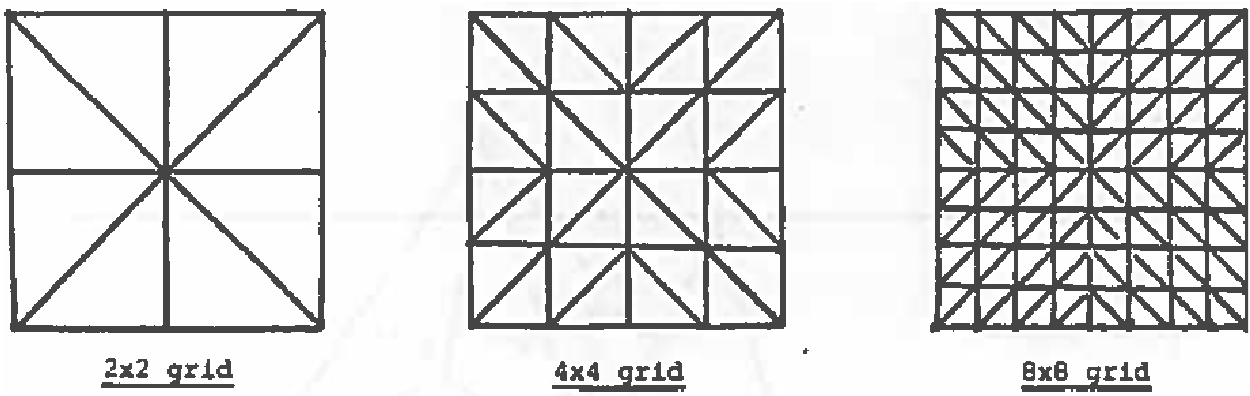}
\end{center}
\caption{Typical grids for the Griffiths problem.}\label{fig:05}
\end{figure}
of the errors) are presented in table~\ref{tab:01} with the results for the LC element compared with the Taylor--Hood 
element and the Raviart bubble element; see Girault \& Raviart \cite{GR86}. It can be seen that the error for the LC
and Taylor--Hood elements are comparable and both are much smaller than the Raviart bubble element for this problem.
Moreover, for all three elements, the optimal convergence rates are observed.

\begin{table}[htb]
\caption{Computed errors for the Griffiths test problem}\label{tab:01}
\begin{center}
\begin{tabular}{l ccc}
\hline
Grid    & $\|p\|_{L^2(\Omega)/R}$ & $\| \vec{v}\|_{H^1(\Omega)}$ & $\| \vec{v} \|_{L^2(\Omega)}$ \\[5pt]
\hline
& \multicolumn{3}{c}{Taylor--Hood element} \\\hline
$4\times4$   & 0.4283 & 0.4802 & 0.01669\\
$8\times8$   & 0.0975 & 0.1189 & 0.00239\\
$16\times16$ & 0.0233 & 0.0296 & 0.00029\\
Order & ${\sim}2$ & ${\sim}2$ & ${\sim}3$\\\hline
& \multicolumn{3}{c}{LC element} \\\hline
$4\times4$   & 0.4878 & 0.4865 & 0.01637\\
$8\times8$   & 0.1009 & 0.1190 & 0.00237\\
$16\times16$ & 0.0233 & 0.0296 & 0.00029\\
Order & ${\sim}2$ & ${\sim}2$ & ${\sim}3$\\\hline
& \multicolumn{3}{c}{Raviart bubble element} \\\hline
$4\times4$   & 1.5469 & 0.6834 & 0.02416\\
$8\times8$   & 0.4314 & 0.1797 & 0.00342\\
$16\times16$ & 0.1142 & 0.0462 & 0.00043\\
Order & ${\sim}2$ & ${\sim}2$ & ${\sim}3$\\
\hline
\end{tabular}
\end{center}
\end{table}

\subsection{Illustrating difficulties with the Taylor--Hood element}\label{sec:6.2}

The solution of a non-Newtonian fluid in the volume of revolution of the region illustrated in
figure~\ref{fig:06}, with the boundary conditions given in the figure, can be reduced to solving the
following set of equations
\begin{subequations}\label{eq:57}
\begin{align}
\label{eq:57a} 
\frac{1}{\Rey}\left( \frac{\pr^2 V_r}{\pr r^2} + \frac{\pr^2 V_r}{\pr z^2} + \frac{1}{r}\frac{\pr V_r}{\pr r} 
- \frac{V_r}{r^2}\right) - \frac{\pr p}{\pr r}& 
= - \Nn \frac{ V^2_\theta}{r} + \Ne\bigg[\left(\frac{\pr V_\theta}{\pr r} - \frac{V_\theta}{r}\right)^2 +
 \left(\frac{\pr V_\theta}{\pr z}\right)^2\bigg],  \\
\label{eq:57b} 
\frac{1}{\Rey}\left( \frac{\pr^2 V_\theta}{\pr r^2} + \frac{\pr^2 V_\theta}{\pr z^2} 
+ \frac{1}{r}\frac{\pr V_\theta}{\pr r} - \frac{V_\theta}{r^2}\right)& =0,\\
\label{eq:57c} 
\frac{1}{\Rey}\left( \frac{\pr^2 V_z}{\pr r^2} + \frac{\pr^2 V_z}{\pr z^2} + \frac{1}{r}\frac{\pr V_z}{\pr r}\right) 
- \frac{\pr p}{\pr z} & =0,\\
\label{eq:57d}
\frac{\pr V_r}{\pr r} + \frac{V_r}{r} + \frac{\pr V_z}{\pr z} & = 0,
\end{align}
\end{subequations}
after a number of assumptions have been made; further details of which are given by Tidd~\cite{Ti87}. For a 
Newtonian fluid the parameter $\Ne=0$ and for a non-Newtonian fluid this parameter gives a measure of 
the non-Newtonian effects. We note that
equation~\eqref{eq:57b} is independent of $V_r$, $V_z$, $p$, $\Ne$ and decouples from the other three equations 
whereas equations~\eqref{eq:57a}, \eqref{eq:57c},~\eqref{eq:57d} represent a Stokes flow problem in $(r,z)$ 
coordinates. This can be solved for the two cases $(\Nn=1, \Ne=0)$ and $(\Nn=0, \Ne=1)$ and the particular solution 
required can be obtained by selecting the required ratio of these two intermediate solutions.

\begin{figure}[htb]
\begin{center}
\includegraphics[width=.6\textwidth]{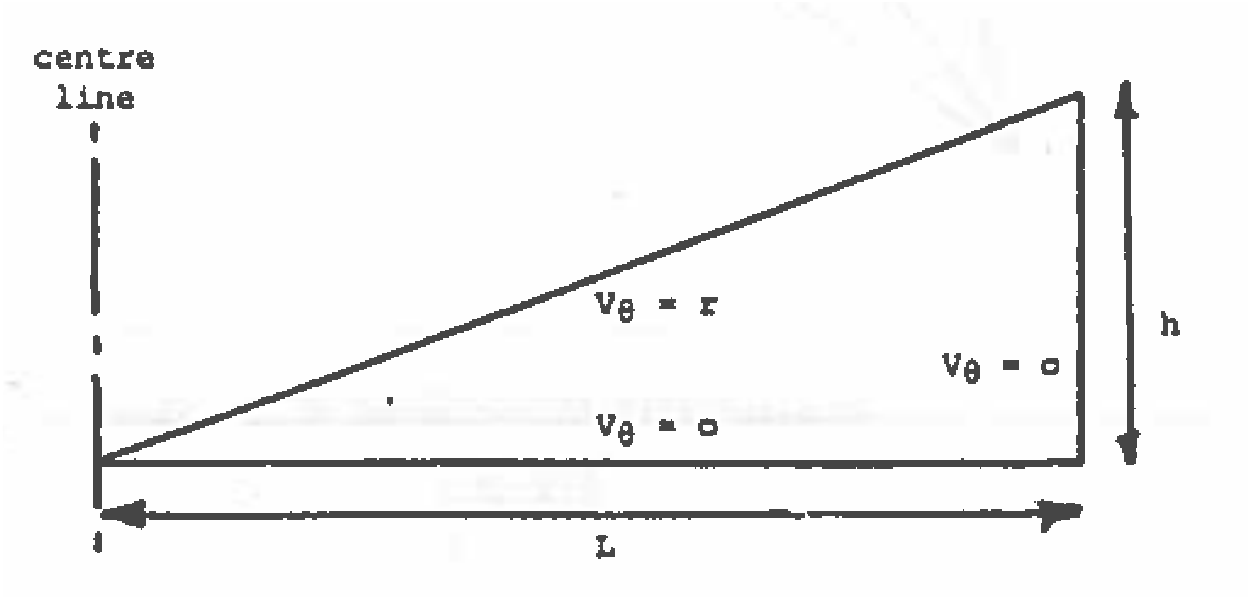}
\end{center}
\caption{The second test problem $(L=1.0, h=0.1)$.}\label{fig:06}
\end{figure}

\begin{figure}[!htb]
\begin{center}
\includegraphics[width=.3\textwidth]{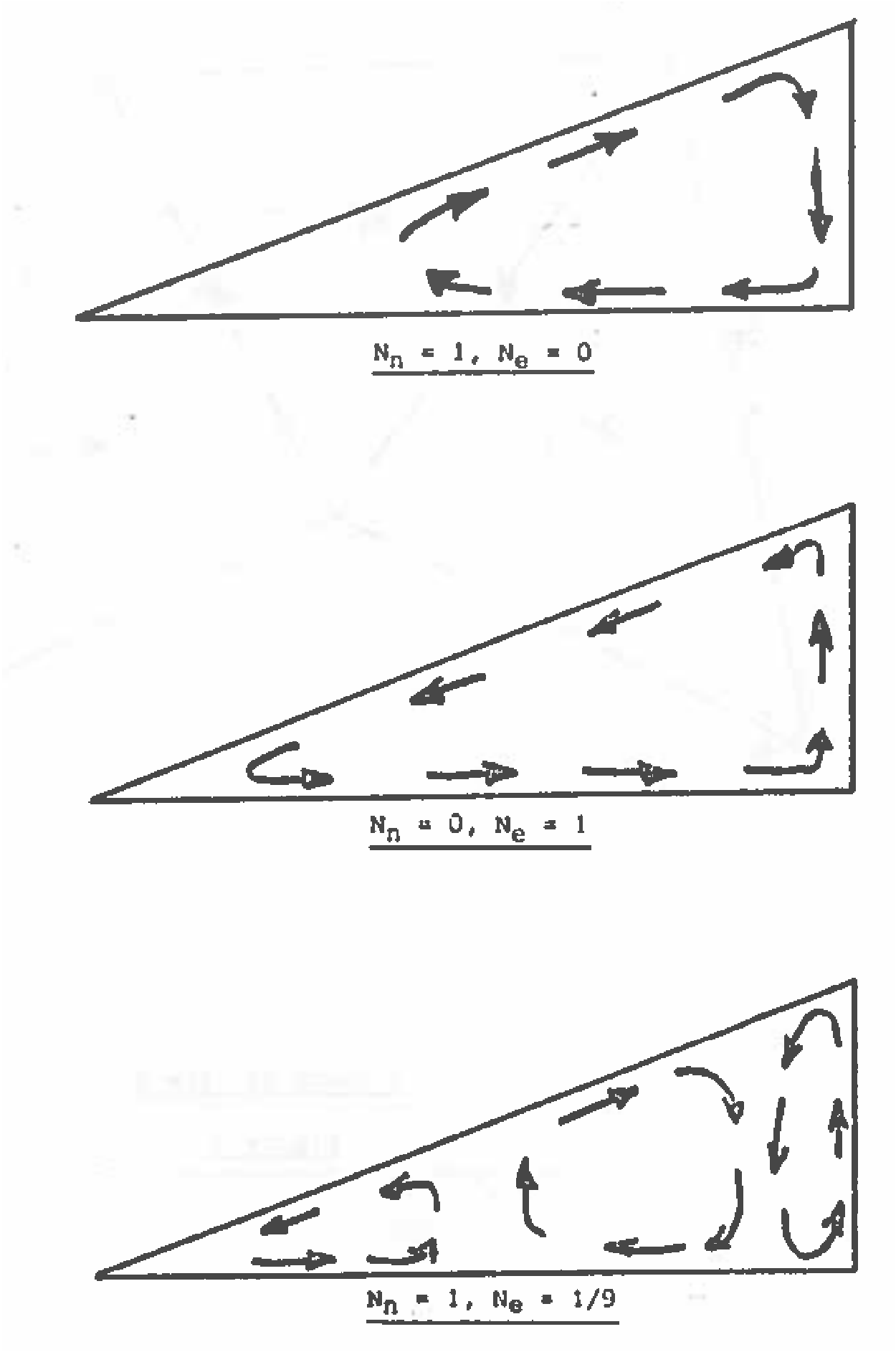}
\end{center}
\caption{Illustration of  the secondary recirculations for the cases of interest $(\Rey=10)$.}\label{fig:07}
\end{figure}

The main flow in this problem is a  swirling ($V_\theta$) flow with the $V_r$ and $V_z$ representing
secondary flows. An indication of the secondary flows for the three cases
\begin{enumerate}[(a)]
\item $(\Nn=1, \Ne=0)$,
\item $(\Nn=0, \Ne=1)$,
\item $(\Nn=1, \Ne=1/9)$, 
\end{enumerate}
is given in figure~\ref{fig:07}. The three secondary recirculations of (c) have been observed by 
Hoppmann \& Baronet \cite{HB65} and
it is in attempting to model these three recirculations that we find that the Taylor--Hood element gives a
very poor solution even on a highly refined grid.

\begin{figure}[!htb]
\begin{center}
\includegraphics[width=.3\textwidth]{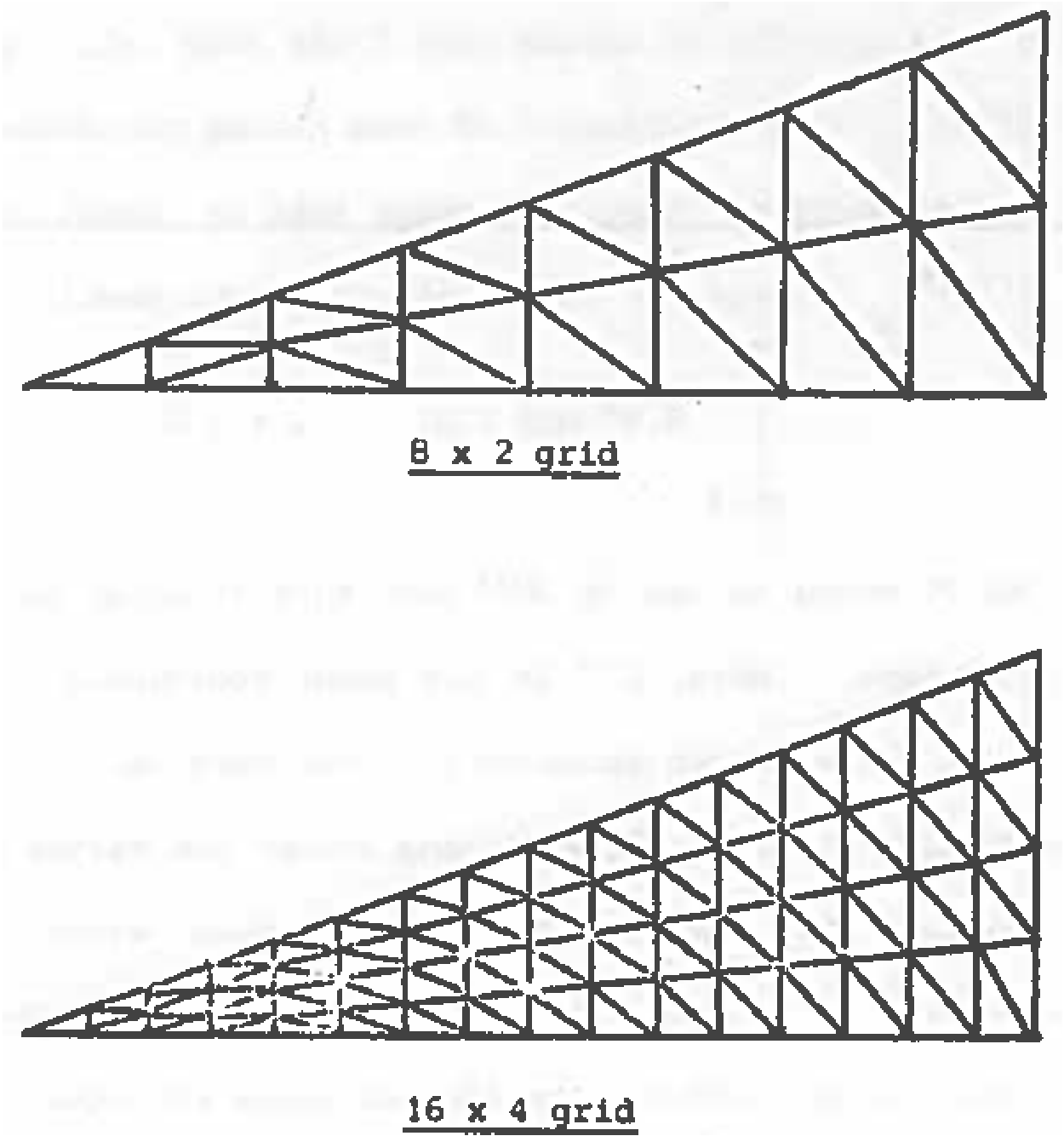}
\end{center}
\caption{Typical grids for the second test problem.}\label{fig:08}
\end{figure}

Typical grids used are illustrated in figure~\ref{fig:08}. The $V_\theta$ problem, namely equation~\eqref{eq:57b} was 
solved on each of the grids. On a given grid the relevant $V_\theta$ numerical solution was used on the 
right-hand side of equation~\eqref{eq:57a}.

We find that the Taylor--Hood and LC elements gave essentially the same results for the case $(\Nn=1, \Ne=0)$ but very
different answers for the case $(\Nn=0, \Ne=1)$ with the LC solutions giving far more consistency from one grid to the next.
Moreover, for the case $(\Nn=1, \Ne=1/9)$, where we are expecting to observe three recirculations, the Taylor--Hood
element gives only one complete recirculation with velocities in almost random directions over almost half the region of the
problem, namely $0\leq r \leq 0.5$, even on a highly refined $64\times16$ grid. However, the LC element resolves
all three recirculations on a $16\times4$ grid and gives very good consistency between the $32\times8$ and $64\times16$ grids.

We note that the above problem does not fall into the analysis in section~\ref{sec:5}. Not only is the problem in $(r,z)$
coordinates but also we have not triangulated into all the corners. By modifying the grid so that we do triangulate into
all the corners the solutions obtained do not change in any significant way.

In order to obtain a solution with the LC element when we have not triangulated into the corners it is necessary either to
fix a pressure in each element with two sides on the boundary or to make the centroid pressure in this element equal to
the centroid pressure in the element with a shared side. In fact these two strategies only affect the pressure solution
in the element with two sides on the boundary but the analysis of the latter approach is more simple, and is discussed in
the appendix.

\section{Conclusion}\label{sec:7}
We have established stability and convergence for the LC element on a wide range of grids. The proof had an
interesting feature, namely enclosing the patches under consideration in larger and overlapping extended patches.
This idea may prove useful in establishing convergence for other elements for which (div-)stability is
difficult to obtain.\footnote{\rblf{\textit{Added in 2020\/}: The only technical requirement is that
each element in the subdivision belongs at most to a finite number $N$ of macro-element patches with $N$ 
independent of $h$, see Remark 8.5.6 in  Boffi et al. [{\it Mixed Finite Element Methods and Applications}, 
Springer, 2013].}}

When using the element for an enclosed flow problem it is necessary to fix two pressure values, one must be a 
centroid pressure and the other a vertex pressure.

Finally, the LC element clearly has some interesting features that make it worthy of consideration. In particular the fact
that it is locally incompressible has been demonstrated to be useful in practice.

\subsection*{Acknowledgements}

We would like to acknowledge the support of the SERC who provided one of us (RWT) with a grant to collaborate with
Professor Nicolaides at Pittsburgh, and to Professor Nicolaides for his help in getting the work in this
report concluded. We would also like to acknowledge the contribution of David Tidd, an SERC research student,
who calculated the numerical results for the second example.



\appendix
\setcounter{equation}{0}
\renewcommand{\theequation}{A\arabic{equation}}

\section{Appendix. Restoring stability when the grid is not triangulated into the corners}\label{sec:a}

In this appendix we assume that $\Omega$ has been split up into $N$ elements $(E^{(i)})^N_{i=1}$
with the first $n$ of them having two sides on the boundary. A simple numerical experiment shows that we cannot 
use the element LC for $(E^{(i)})^N_{i=1}$ because the continuity equation namely
\begin{equation}\label{eq:a01}
\int_{E^{(i)}} \div v^h_M \, d\Omega = 0, \quad 1\leq i\leq n,
\end{equation}
with $M$ equal to one in $E^{(i)}$ and with $M$ equal to $L^{(i)}$ in $E^{(i)}$ are linearly dependent.
(Here, $L^{(i)}$ is the areal coordinate in $E^{(i)}$ equal to~$1$ at the vertex between two sides on the boundary
$\pr\Omega$ of $\Omega$). We can overcome this difficulty by arbitrarily assigning either the vertex pressure or the
centroid pressure (to zero or any other value) without affecting the velocity solution. It is interesting to note
that this strategy does not destroy the reason for introducing the element since the equations~\eqref{eq:a01} with $M$
equal to one or $L^{(i)}$ are merely multiples of each other and keeping either of them in the system of equations
ensures continuity over the element $E^{(i)}$. If we make the particular choice of removing the centroid pressure 
(i.e.~setting it to zero), then this is equivalent to using a Taylor--Hood element for $E^{(i)}$, $1\leq i\leq n$.
 It is surprising that we have not been able to obtain a satisfactory stability result for this strategy.

We could set this centroid pressure to any other value and it only affects the resulting numerical solution by changing the
pressure approximation in that element. The particular strategy that we analyse below is choosing the centroid pressure
in $E^{(i)}$to be equal to the centroid pressure in $E^{(n+i)}$, where $E^{(n+i)}$ is the element that
has a side in common with $E^{(i)}$. This is equivalent to using the pressure space
\begin{equation}\label{eq:a02}
\begin{split}
P^h = \big\{p \mid 
      p &=p_0+p_1;\quad
      p_0 \in P_0(E^{(i)} \cup E^{(n+i)})\quad \forall\, i=1,\ldots,n, \\
      p_0 &\in P_0(E^{(i)})\quad \forall\, i =  n+1,\ldots, N;
      p_0 \in \Lto(\Omega); \\
      p_1 &\in P_1(E^{(i)})\quad \forall\, i = 1, \ldots, N;\
      p_i \in \Lto(\Omega) \cap C(M) \big\}.
\end{split}
\end{equation}
Let $M$ be the three-element patch of type~3 illustrated in figure~\ref{fig:09} for which
\begin{equation}\label{eq:a03}
\begin{rcases}
\tri1 = E^{(j)}\\
\tri2 = E^{(n+j)}
\end{rcases}
\quad\text{for } 1 \leq j \leq n.
\end{equation}
\begin{figure}[htb]
\begin{center}
\includegraphics[width=.4\textwidth]{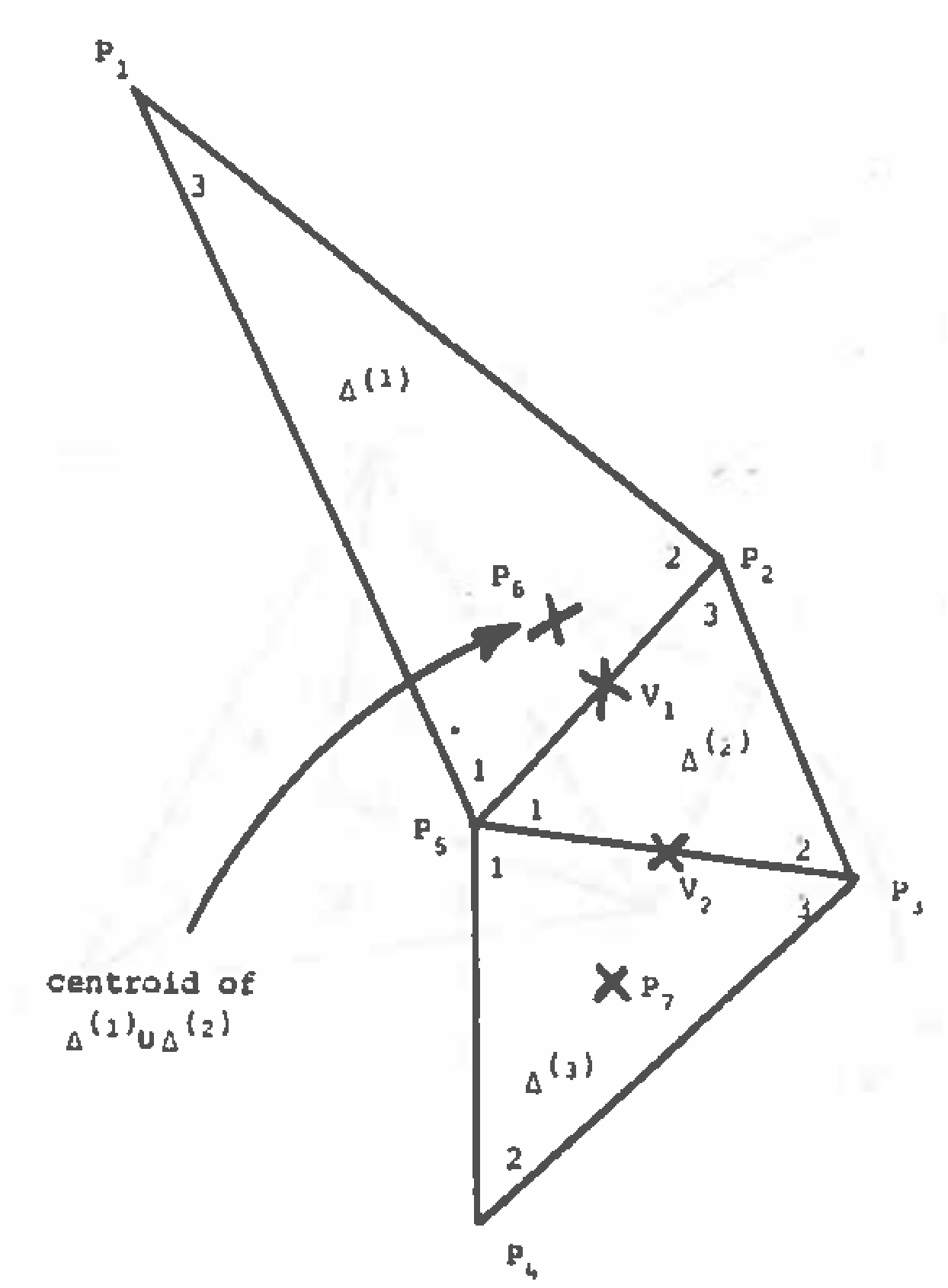}
\end{center}
\caption{A patch of type 3.}\label{fig:09}
\end{figure}

Using the notation of section~\ref{sec:2} then $\underline{v}$ is a four-dimensional vector and $\underline{p}$ a 
seven-dimensional vector and the space $\PhM$ for this patch is
\begin{equation}\label{eq:a04}
\begin{split}
\PhM = \big\{p =p_0+p_1 \mid 
       p_0 \in P^0(\tri1\cup\tri2),\ 
       p_0 \in P_0(\tri3),&  \\
       p_1 \in C(\overline{M}),\
       p_1 \in P_1(\tri{i}) \text{ for } i=1,2,3& \big\} .
\end{split}
\end{equation}
The matrix $B$ is the $4\times7$ matrix
\begin{equation}\label{eq:a05}
\begin{bmatrix}
-\bb13 & \bb21+\bb11 & -\bb22      & 0      & \bb23+\bb12 & 0       & 0      \\[4pt]
0      & -\bb23      & \bb31+\bb21 & -\bb32 & \bb33+\bb22 & -4\bb23 & -4\bb32
\end{bmatrix}
\end{equation}
which has null space spanned by
\begin{equation}\label{eq:a06}
\begin{split}
S^T_1 & = (1,1,1,1,1,0,0), \\
S^T_2 & = (0,0,0,0,0,1,1), \\
S^T_3 & = (0,0,0,4,0,0,-1),
\end{split}
\end{equation}
and we denote by $S$ the $7\times3$ matrix the $i$th column of which is $S_i$. The space $\RhM$ for this patch
is defined to be
\begin{equation}\label{eq:a07}
\RhM = \big\{p \mid 
       p = p_0+p_1 \in \PhM,\
       p_0 \in \Lto(M),\
       p_1 \in \Lto(M),\
       p \in \Lto(\tri3) \big\}.
\end{equation}
Thus the constraints from $\PhM$ to $\RhM$ are
\begin{enumerate}[(i)]
\item $\displaystyle \int_M p_0\, d\Omega = 0 \Longrightarrow P_6\big(|\tri1|+|\tri2|\big) + P_7|\tri3| = 0$,
\item $\displaystyle \int_M p_1\, d\Omega = 0 \Longrightarrow P_1|\tri1| + P_2\big(|\tri1|+|\tri2|\big)
+ P_3\big(|\tri2|+|\tri3|\big) + P_4|\tri3| +P_5|M| = 0$,
\item $\displaystyle \int_{\tri3} p\, d\Omega = 0 \Longrightarrow P_3+P_4+P_5+3P_7 = 0$,
\end{enumerate}
giving the constraint equation~\eqref{eq:23} and the matrix $C_M$ is given by
\begin{equation}\label{eq:a08}
C_M = H S = 
\begin{bmatrix}
0    & |M| & -|\tri3| \\[4pt]
3|M| & 0   & 4|\tri3| \\[4pt]
3    & 3   & 1
\end{bmatrix},
\end{equation}
which is clearly nonsingular.

We denote by $\beps^\sigma_3$ the class of patches of  type~3 illustrated in figure~\ref{fig:09}. Thus, for every
$M\in\beps^\sigma_3$ then for each $p\in\RhM$ (defined by \eqref{eq:a07}) there exists $v_M \in \VhM$
 (defined by \eqref{eq:10}) such that
\begin{equation}\label{eq:a09}
\begin{split}
(\div (\alpha v_M), p)_M & \geq \alpha \beta_3 \|p\|_{L^2(M)}^2, \\
|\alpha v_M|_{1,M} & \leq \alpha C_3 \|p\|_{L^2(M)},
\end{split}
\end{equation}
with $\beta_3$ and $C_3$ independent of the choice of $M\in\beps^\sigma_3$ but dependent on the 
regularity constant $\sigma$, where $\alpha$ is any positive constant.

Letting $M\in\beps^\sigma_3$, we construct an operator $\Pi^{(3)}_M$ on the set $\PhM$ (defined by \eqref{eq:a09}) by
\begin{equation}\label{eq:a10}
\Pi^{(3)}_M p =
\begin{cases}
k = k^{(1)}+k^{(2)} & \text{in } \tri1 \text{ and } \tri2,\\
4k\big(1-3L^{(3)}\big) - 3k^{(3)}\big(1-4L^{(3)}\big) & \text{in } \tri3,
\end{cases}
\end{equation}
where
\begin{enumerate}[(i)]
\item $\tri{i}$ are as shown in figure~\ref{fig:09},
\item $L^{(3)}$ is the areal coordinate in $\tri3$ that is equal to zero on the side in common with $\tri2$,
\item $k^{(i)}$ is defined as in \eqref{eq:35}.
\end{enumerate}

\begin{thm}\label{thm:a1}
For every $p\in\PhM$ then
\begin{enumerate}[\upshape(I)]
\item $p - \Pi^{(3)}_M p \in \RhM$,
\item $\big(\div v, \Pi^{(3)}_M p\big) = 0 \quad \forall\, v \in \VhM$
\end{enumerate}
(with $\PhM$, $\RhM$, $\VhM$ defined by \eqref{eq:a04}, \eqref{eq:a07}, \eqref{eq:10} respectively).
\end{thm}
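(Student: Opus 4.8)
The plan is to mirror the proof of Theorem~\ref{thm:4.1}, exploiting the close structural analogy between the type~2 and type~3 patches. Part~(I) is verified by checking the three constraints that cut $\RhM$ out of $\PhM$ (namely $\int_M p_0 = 0$, $\int_M p_1 = 0$, $\int_{\tri3} p = 0$, as listed just before \eqref{eq:a08}). First I would split $\Pi^{(3)}_M p$ into its piecewise-constant part $[\Pi^{(3)}_M p]_0$ and its continuous piecewise-linear part $[\Pi^{(3)}_M p]_1$, exactly as in \eqref{eq:38}--\eqref{eq:39}, introducing an analogous scalar $\alpha$ (chosen so that $\int_M [\Pi^{(3)}_M p]_1 \, d\Omega = \int_M p_1 \, d\Omega$, with the degenerate case $k=0$ handled separately as before). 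One then reads off that $\int_{\tri3}(p - \Pi^{(3)}_M p)\,d\Omega = 0$ by construction, since $\frac{1}{|\tri3|}\int_{\tri3}\Pi^{(3)}_M p\,d\Omega = \frac{1}{|\tri3|}\int_{\tri3}(4k(1-3L^{(3)}) - 3k^{(3)}(1-4L^{(3)}))\,d\Omega = k^{(3)}$, using $\int_{\tri3} L^{(3)} = |\tri3|/3$. The two $\int_M$-conditions then follow from the definition of $\alpha$ together with an area-weighted bookkeeping identity relating $k$, $k^{(1)}$, $k^{(2)}$, $k^{(3)}$ over the patch, just as in Theorem~\ref{thm:4.1}(A). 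Since $\Pi^{(3)}_M p$ is manifestly again in $\PhM$ (constant on $\tri1\cup\tri2$, continuous and piecewise linear), this establishes~(I).

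For part~(II), I would expand $(\div v, \Pi^{(3)}_M p)_M$ against a test function $v \in \VhM$, analogously to \eqref{eq:41}. The constant-on-$M$ contribution $(\div v, k)_M$ vanishes because $v \in (H^1_0(M))^2$, so $\int_M \div v \, d\Omega = 0$. What remains is a term supported on $\tri3$ of the form $3k^{(3)}(\div v, 1 - 4L^{(3)})_{\tri3}$ (up to the factor $4k$ already absorbed). On $\tri3$, the only admissible velocity component is the edge bubble $V(4 L^{(3)}_a L^{(3)}_b)$ associated with the interior edge shared with $\tri2$ — the other bubbles and vertex functions vanish by the $H^1_0(M)$ constraint — and the footnote identity $\int_{\tri}L_i(1-4L_j)\,d\tri = 0$ for $i \neq j$, already invoked in the proof of Theorem~\ref{thm:4.1}(B), kills this integral. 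Hence $(\div v, \Pi^{(3)}_M p)_M = 0$ for all $v \in \VhM$, which is~(II).

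The main obstacle — such as it is — lies in part~(I): getting the constant $\alpha$ and the area-weighted identity exactly right so that \emph{both} $\int_M p_0$ and $\int_M p_1$ conditions hold simultaneously, while also confirming that the decomposition into $[\Pi^{(3)}_M p]_0$ and $[\Pi^{(3)}_M p]_1$ is the unique one with $[\Pi^{(3)}_M p]_0$ piecewise constant and $[\Pi^{(3)}_M p]_1$ continuous. This is a finite linear-algebra check over the three elements, entirely parallel to \eqref{eq:40}, but one must be careful because $\tri1$ and $\tri2$ now share a \emph{single} constant (they form one $P_0$ macro-element in the augmented space \eqref{eq:a02}), whereas in the type~2 patch the constant was genuinely split across $\tri1$, $\tri2$, $\tri3$. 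Everything else — the vanishing of the $\tri3$ integral, the $H^1_0$ argument, the bubble identity — is routine and identical in spirit to Theorem~\ref{thm:4.1}.
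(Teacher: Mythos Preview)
Your proposal is correct and follows essentially the same route as the paper: both parts are reduced to the type~2 argument of Theorem~\ref{thm:4.1}, with the same $\alpha$-parametrised splitting for~(I) and the same $(\div v,k)_M=0$ plus bubble-integral identity for~(II). One small slip to fix when you write it out: the residual term on $\tri3$ in part~(II) carries the coefficient $3\big(k-k^{(3)}\big)$, not $3k^{(3)}$, and the splitting into $[\,\cdot\,]_0$ and $[\,\cdot\,]_1$ is \emph{not} unique (the constant function lies in both spaces) --- it is precisely this non-uniqueness that gives you the free parameter $\alpha$ to satisfy both $\int_M$ constraints simultaneously.
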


\begin{proof}
To prove (I) we need to show two results. First, that
\begin{equation}\label{eq:a11}
\int_{\tri3} \big(p - \Pi^{(3)}_M p\big) \, d\Omega = 0.
\end{equation}
Second, that we can split $\big(p - \Pi^{(3)}_M p\big)$ into
\begin{equation}\label{eq:a12}
\big(p - \Pi^{(3)}_M p\big) = \left[p - \Pi^{(3)}_M p\right]_1 + \left[p - \Pi^{(3)}_M p\right]_0
\end{equation}
such that
\begin{enumerate}[(a)]
\item $\big[p - \Pi^{(3)}_M p\big]_0$ is constant in $\tri1\cup\tri2$, constant in $\tri3$ and satisfies
\begin{equation}\label{eq:a13}
\int_M \left[p - \Pi^{(3)}_M p\right]_0 \, d\Omega = 0,
\end{equation}
\item $\big[p - \Pi^{(3)}_M p\big]_1$ is linear in each $\tri{i}$, is continuous over $M$ and satisfies
\begin{equation}\label{eq:a14}
\int_M \left[p - \Pi^{(3)}_M p\right]_1 \, d\Omega = 0.
\end{equation}
\end{enumerate}
To do this we write
\begin{enumerate}[(i)]
\item $p = p_0 + p_1$, with $p_0$ and $p_1$ defined by \eqref{eq:a03},
\item 
$
\big[p - \Pi^{(3)}_M p\big]_1 = \begin{cases} p_1 - \alpha    &\quad \text{in } \tri1\cup\tri2\\
                               p_1 - \alpha - 12\big(k^{(3)}-k\big) L^{(3)} & \quad \text{in } \tri3,
\end{cases}$
\item $
\big[p - \Pi^{(3)}_M p\big]_0 =  \begin{cases}p_0 - k + \alpha    &\quad \text{in } \tri1\cup\tri2\\
                               p_0 - 4k + 3k^{(3)} + \alpha &\quad \text{in } \tri3,
\end{cases}$
\end{enumerate}
with $\alpha$ as yet undefined. 
Next, choosing $\alpha$ so that
\begin{equation}\label{eq:a15}
\alpha = \frac{k|M| + 3\big(k-k^{(3)}\big) |\tri3| -\int_M p_0\, d\Omega}{|M|},
\end{equation}
ensures that equation \eqref{eq:a13} is satisfied.
Moreover, since the definition \eqref{eq:a10} of $\Pi^{(3)}_M p$ ensures that
\begin{equation}\label{eq:a16}
\int_M \big(p - \Pi^{(3)}_M p\big) \, d\Omega = 0,
\end{equation}
then, for this value of $\alpha$, we see that equation \eqref{eq:a14} is also satisfied. 
Finally, the definition of $\Pi^{(3)}_M p$ ensures
that equation \eqref{eq:a11} holds. Thus we have established part (I) of the theorem. 
Part (II) of the theorem follows by 
the same argument as used in part (B) of theorem~\ref{thm:4.1}.
\end{proof}

\begin{cor}[Corollary to theorem \ref{thm:a1}]
For each $p\in\PhM$ there exists $v_M \in \VhM$ such that
\begin{align*}
(\div v_M, p)_M & \geq \beta_3 \|p-\mu_M\|_{L^2(\tri1\cup\tri2)}^2,\\
|v_M|_{1,M}      & \leq c_3 \|p-\mu_M\|_{L^2(\tri1\cup\tri2)},
\end{align*}
with
\begin{equation*}
\mu_M = \frac{\int_{\tri1\cup\tri2} p\, d\Omega}{|\tri1| + |\tri2|}  .
\end{equation*}
\end{cor}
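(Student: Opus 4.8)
The plan is to transcribe, almost word for word, the proof of the Corollary to Theorem~\ref{thm:4.1}, with $\Pi^{(2)}_M$ replaced by $\Pi^{(3)}_M$, the pair $(\beta_2,c_2)$ by $(\beta_3,C_3)$, and the single element $\tri2$ by the union $\tri1\cup\tri2$. Fix $p\in\PhM$. By part~(I) of Theorem~\ref{thm:a1} we have $p-\Pi^{(3)}_M p\in\RhM$, so the patch‑stability bound \eqref{eq:a09} (applied with $\alpha=1$ and with $p-\Pi^{(3)}_M p$ in the role of the generic element of $\RhM$) furnishes a field $w\in\VhM$ with
\begin{gather*}
(\div w,\,p-\Pi^{(3)}_M p)_M\ \ge\ \beta_3\,\|p-\Pi^{(3)}_M p\|_{L^2(M)}^2,\\
|w|_{1,M}\ \le\ C_3\,\|p-\Pi^{(3)}_M p\|_{L^2(M)}.
\end{gather*}
Since $w\in\VhM$, part~(II) of Theorem~\ref{thm:a1} gives $(\div w,\Pi^{(3)}_M p)_M=0$, and therefore $(\div w,p)_M=(\div w,\,p-\Pi^{(3)}_M p)_M\ge\beta_3\,\|p-\Pi^{(3)}_M p\|_{L^2(M)}^2$.

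The next step is the Stenberg‑type homogeneity rescaling. I would set $v_M=\gamma w$ with
\[
\gamma\ =\ \frac{\|p-\mu_M\|_{L^2(\tri1\cup\tri2)}}{\|p-\Pi^{(3)}_M p\|_{L^2(M)}}
\]
(and simply take $v_M=0$ in the degenerate case that the denominator vanishes, for then $p=\Pi^{(3)}_M p$ on $M$, hence $p$ is the constant $\mu_M$ on $\tri1\cup\tri2$ and both asserted inequalities hold trivially). The only fact about $\Pi^{(3)}_M$ needed beyond Theorem~\ref{thm:a1} is that it is \emph{constant} on $\tri1\cup\tri2$ — immediate from \eqref{eq:a10} — together with the elementary observation that $\mu_M$, being the mean value of $p$ over $\tri1\cup\tri2$, is the $L^2(\tri1\cup\tri2)$‑orthogonal projection of $p$ onto the constants; hence, comparing with that constant value of $\Pi^{(3)}_M p$,
\[
\|p-\mu_M\|_{L^2(\tri1\cup\tri2)}\ \le\ \|p-\Pi^{(3)}_M p\|_{L^2(\tri1\cup\tri2)}\ \le\ \|p-\Pi^{(3)}_M p\|_{L^2(M)},
\]
so $0\le\gamma\le 1$. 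Consequently $|v_M|_{1,M}=\gamma|w|_{1,M}\le C_3\,\|p-\mu_M\|_{L^2(\tri1\cup\tri2)}$, which is the second inequality with $c_3=C_3$, while
\[
(\div v_M,p)_M=\gamma\,(\div w,p)_M\ \ge\ \gamma\,\beta_3\,\|p-\Pi^{(3)}_M p\|_{L^2(M)}^2,
\]
and since $\gamma\,\|p-\Pi^{(3)}_M p\|_{L^2(M)}=\|p-\mu_M\|_{L^2(\tri1\cup\tri2)}\le\|p-\Pi^{(3)}_M p\|_{L^2(M)}$, the right‑hand side is $\ge\beta_3\,\|p-\mu_M\|_{L^2(\tri1\cup\tri2)}^2$, which is the first inequality.

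I do not anticipate a real obstacle: the argument is a direct copy of the Corollary to Theorem~\ref{thm:4.1}, with Theorem~\ref{thm:a1} in the role of Theorem~\ref{thm:4.1} and \eqref{eq:a09} in the role of \eqref{eq:30}. The one place that wants a sentence of care is the norm bookkeeping in the rescaling — trading the patch norm $\|p-\Pi^{(3)}_M p\|_{L^2(M)}$ produced by the inf–sup bound on the ``sup side'' for the local norm $\|p-\mu_M\|_{L^2(\tri1\cup\tri2)}$ that appears in the statement. This works precisely because $\Pi^{(3)}_M p$ is constant on $\tri1\cup\tri2$ and the cell mean $\mu_M$ is the best constant $L^2$‑approximant there, while $\tri1\cup\tri2\subset M$ makes the local norm no larger than the patch norm, so the chosen $\gamma$ is a genuine scaling factor in $[0,1]$.
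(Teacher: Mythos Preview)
Your proof is correct and is precisely what the paper intends: its entire proof reads ``similar to the proof of the corollary to theorem~\ref{thm:4.1}'', and you have carried out that transcription, replacing $\Pi^{(2)}_M$, \eqref{eq:30}, and $\tri2$ by $\Pi^{(3)}_M$, \eqref{eq:a09}, and $\tri1\cup\tri2$. Your observation that $\mu_M$ is the best constant $L^2(\tri1\cup\tri2)$-approximant (so that $\|p-\mu_M\|_{L^2(\tri1\cup\tri2)}\le\|p-\Pi^{(3)}_M p\|_{L^2(\tri1\cup\tri2)}$, since $\Pi^{(3)}_M p$ is constant there by \eqref{eq:a10}) is the right bookkeeping step and is glossed over in the paper.
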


\begin{proof}
The proof of this corollary is similar to the proof of the corollary to theorem~\ref{thm:4.1}.
\end{proof}

To establish global stability and optimal convergence rates we assume that $\Omega$ is a polygonal region 
which has been triangulated into $N$ triangular elements $\{E^{(i)}\}^N_{i=1}$. We assume that the first $n$
of these elements (with $n\ll N$) have two sides in common with the boundary $\pr\Omega$ of $\Omega$.
We denote by $E^{(n+1)}$ that element which has a side in common with $E^{(i)}$, $1\leq i \leq n$, and we
denote by $M_{n+1}$ the extended patch of the type~3 which contains $E^{(i)}$ as element $\tri1$ and
$E^{(n+i)}$ as element $\tri2$. We further assume that each element $E^{(i)}$, $2n+1 \leq i \leq N$, 
sits inside an extended patch $M_i$ of the type~2 as element $\tri2$. These assumptions exclude grids that
\begin{enumerate}[(a)]
\item contain two elements both of which have two sides on the boundary $\pr\Omega$ of $\Omega$ and which
 have a side in common with a single element of the grid,

\item contain patches of the type~1 with one side in common with boundary $\pr\Omega$ of $\Omega$.
\end{enumerate}

We define the pressure space $P^h$ by \eqref{eq:a02} and a function $\mu \in \Lto(\Omega)$ 
by
\begin{align*}
\mu  =  
\begin{cases}
\int_{E^{(i)}\cup E^{(n+i)}} p\, d\Omega \big/ {(|E^{(i)}|+|E^{(n+i)}|)} &\quad \text{in } E^{(i)} 
\cup E^{(n+i)} \quad \text{ for } i = 1,2,\dots,n \\[2pt]
    \int_{E^{(i)}} p\, d\Omega \big/ {|E^{(i)}|} &\quad \text{in } E^{(i)} \quad \text{ for } i = 2n+1,\dots, n.
\end{cases}
\end{align*}
The argument now follows that in section~\ref{sec:5} recognising that there are 
two types of patch involved and with
\begin{equation*}
\tilde{C} = \max (C_2, C_3), \quad \beta = \min (\beta_2, \beta_3).
\end{equation*}
Thus, we obtain global stability and optimal convergence rates 
(i.e.~inequality \eqref{eq:33}) on a wide range of grids 
even when the grid has not been triangulated into the corners.

To generalise the above argument to include all possible grids in this optimal convergence result we have to reconsider 
those grids that are excluded. The exclusion (a) above can be overcome by taking a three-element patch that looks 
like the patch 1.2 or 1.3 in which the term $p_0$ is constant throughout the patch. In practice, as one refines the 
grid to obtain more accurate results the necessity for having such elements as described in (a) is removed.

\begin{figure}[!htb]
\begin{center}
\includegraphics[width=.5\textwidth]{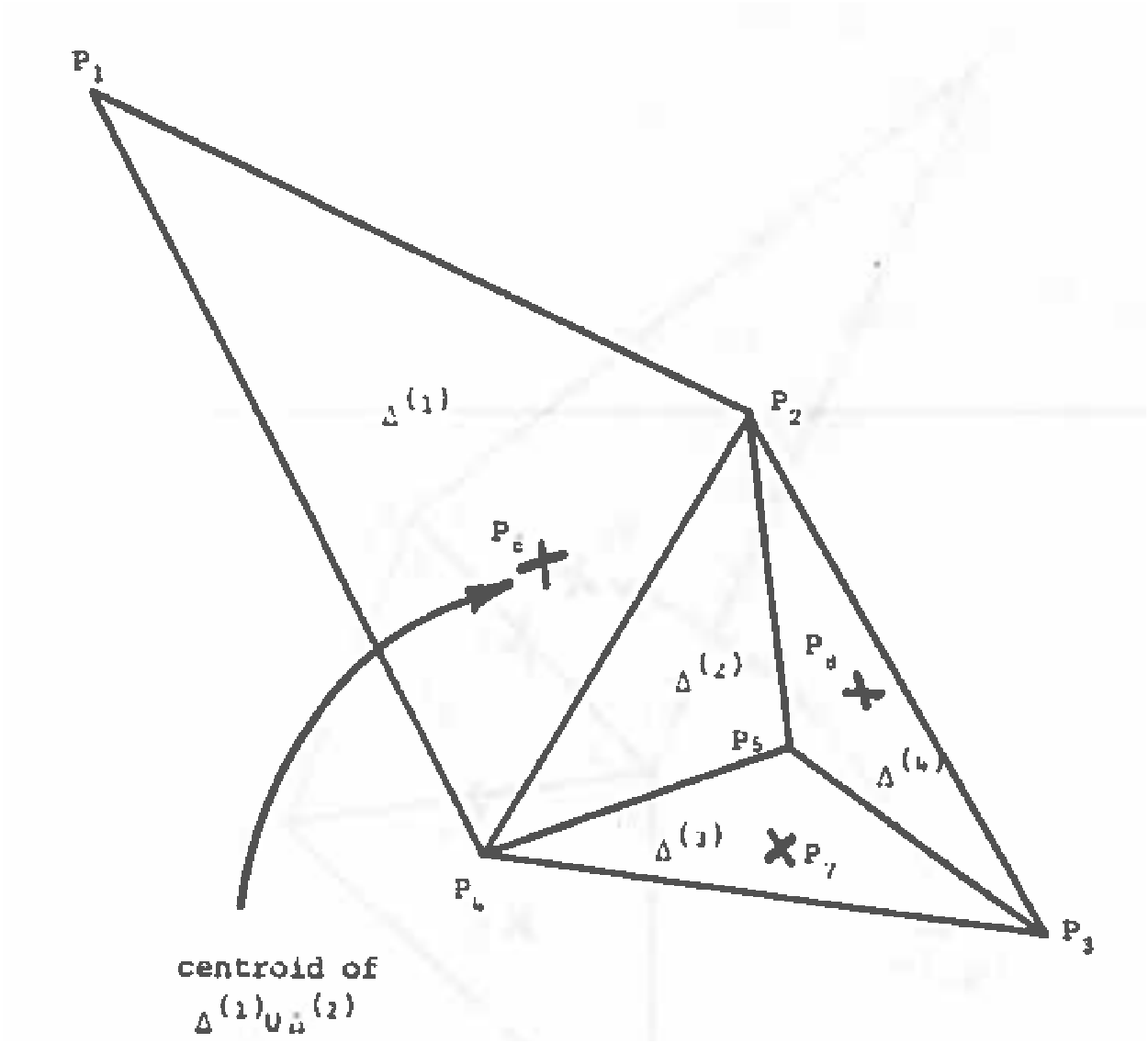}
\end{center}
\caption{A four-element patch.}\label{fig:10}
\end{figure}

The elements excluded by (b) can be included in the argument by using patches of the type~1 as mentioned in 
section~\ref{sec:5}. But to fully generalise the argument we need to also include the four-element patch illustrated
 in figure~\ref{fig:10} to cover the hypothetical possibility that for  one of the elements $\{E^{(i)}\}^{n}_{i=1}$, the
 element $E^{(n+i)}$ belongs to a patch of the type~1 which has oneside on the boundary.
  In this patch the term $p_0$ in the pressure space is constant throughout $\tri1\cup\tri2$ 
(or equivalently $E^{(i)}\cup E^{(n+i)}$).


\begin{thebibliography}{99}

\bibitem{BP79}
Bercovier, M. and Pironneau, O.
Error estimates for finite element method solution of the Stokes problem in the primitive variables. 
\textit{Numer. Math.} 
\textbf{33}, 211--224 (1979).

\bibitem{BN83}
Boland, J.M. and Nicolaides, R.A. 
Stability of finite elements under divergence constraints. 
\textit{SIAM J. Numer. Anal.}
\textbf{20}, 722--731 (1983).

\bibitem{GR79}
Girault, V. and Raviart, P.A. 
\textit{Finite Element Approximation of the Navier--Stokes Equations}.
Lecture Notes in Mathematics 749, Springer (1979).

\bibitem{GR86}
Girault, V. and Raviart, P.A. 
\textit{Finite Element Methods for Navier--Stokes Equations}.
Springer (1986).

\bibitem{GLCL80}
Gresho, P.M., Lee, R.L., Chan, S.T. and Leone Jr, J.M. 
A new finite element for incompressible or Boussinesq fluids. 
In \textit{Proc. Third Int. Conf. on Finite Elements in Flow Problems}, 
pp.~204--215 (1981).

\bibitem{GM79}
Griffiths, D.F. and Mitchell, A.R.
Finite elements for incompressible flow. 
\textit{Math. Meth. Appl. Sci.} \textbf{1}, 16--31 (1979). 

\bibitem{Gr82}
Griffiths, D.F. 
The effect of pressure approximations on finite element calculations of incompressible flows. 
In \textit{Numerical Methods for Fluid Dynamics} 
(K.W. Morton and M.J. Baines eds) pp. 359–374. Academic Press, San Diego (1982).

\bibitem{HB65}
Hoppmann, W.H. and Baronet, C.N.
Study of flow induced in viscoelastic liquid by a rotating cone. 
\textit{Trans. Soc. Rheol.}
\textbf{9}, 417--423 (1965). 

\bibitem{HT74}
Hood, P. and Taylor, C.
Navier--Stokes equations using mixed interpolation. 
In \textit{Finite Element Methods in Flow Problems}, 
pp.~121--132, Huntsville: UAH Press (1974).

\bibitem{St84}
Stenberg, R. 
Analysis of mixed finite elements methods for the Stokes problem: a unified approach. 
\textit{Math. Comput.} \textbf{42}, 9--23 (1984).

\bibitem{TTK86}
Tidd, D.M., Thatcher, R.W. and Kaye, A.
The free surface of Newtonian and non-Newtonian fluids trapped by surface tension.
NA Report 127 Manchester University/UMIST Joint Series (1986).

\bibitem{Ti87}
Tidd, D.M.
Finite element calculations for flow in rheogoniometers with a free surface.
PhD Thesis, UMIST, Manchester, UK (1987).

\end{thebibliography}
\end{document}